\documentclass[11pt,reqno]{amsart}


\usepackage[T1]{fontenc}

\usepackage{mathtools}
\usepackage{amsmath}								
\usepackage{amssymb}
\usepackage{amsthm}
\usepackage{amscd}
\usepackage{amsfonts}
\usepackage{stmaryrd}
\usepackage{algorithm, algorithmic}
\usepackage{ wasysym }
\usepackage{listings}

\usepackage{caption}
\usepackage{subcaption}
\usepackage{euler}

\usepackage{extarrows}

\usepackage[colorlinks, linktocpage, citecolor = red, linkcolor = blue]{hyperref}
\usepackage{color}

\usepackage{tikz}									
\usetikzlibrary{matrix}
\usetikzlibrary{patterns}
\usetikzlibrary{matrix}
\usetikzlibrary{positioning}
\usetikzlibrary{decorations.pathmorphing}
\usetikzlibrary{cd}

\usepackage{fullpage}
\usepackage[shortlabels]{enumitem}

\linespread{1.1}


\newtheorem{maintheorem}{Theorem}									
\newtheorem{theorem}{Theorem}[section]
\newtheorem{lemma}[theorem]{Lemma}
\newtheorem{corollary}[theorem]{Corollary}

\newtheorem{proposition}[theorem]{Proposition}

\theoremstyle{definition}
								
\newtheorem{definition}[theorem]{Definition}

\newtheorem{example}[theorem]{Example}

\newtheorem{remark}[theorem]{Remark}


\newcommand{\RR}{\mathbb{R}}


\title{Voronoi Cells in Metric Algebraic Geometry of Plane Curves}

\author{Madeline Brandt}
\address{Department of Mathematics, Brown University}
\email{\href{mailto:madeline\_brandt@brown.edu}{madeline\_brandt@brown.edu}}

\author{Madeleine Weinstein}
\address{Department of Mathematics, Stanford University}
\email{\href{mailto:mweinste@stanford.edu}{mweinste@stanford.edu}}



\begin{document}

\begin{abstract} 
Voronoi cells of varieties encode many features of their metric geometry. 
We prove that each Voronoi or Delaunay cell of a plane curve appears as the limit of a sequence of cells obtained from point samples of the curve.
We use this result to study metric features of plane curves, including the medial axis, curvature, evolute, bottlenecks, and reach. In each case, we provide algebraic equations defining the object and, where possible, give formulas for the degrees of these algebraic varieties. We show how to identify the desired metric feature from Voronoi or Delaunay cells, and therefore how to approximate it by a finite point sample from the variety.
\end{abstract}

\maketitle

\setcounter{tocdepth}{1}

\section{Introduction}

\emph{Metric algebraic geometry} addresses questions about real algebraic varieties involving distances. For example, given a point $x$ on a real algebraic plane curve $X \subset \mathbb{R}^2$, we may ask for the locus of points which are closer to $x$ than to any other point of $X$. This is called the \emph{Voronoi cell of $X$ at $x$} \cite{voronoi}.
The boundary of a Voronoi cell consists of points which have more than one nearest point to $X$. So we may ask, given a point in $\mathbb{R}^2$, how close must it be to $X$ in order to have a unique nearest point on $X$? This quantity is called the \emph{reach}, and was first defined in \cite{Federer}. 

We use Voronoi cells and their duals, Delaunay cells (see Definition \ref{def:delaunay}), to study metric features of plane curves. The following theorem makes precise the idea behind Figures \ref{fig:voronoi_butterfly} and \ref{fig:delaunay_butterfly}. 
\begin{maintheorem}
\label{thm:convergence}
Let $X$ be a compact algebraic curve in $\mathbb{R}^2$ and $\{A_\epsilon\}_{\epsilon \searrow 0}$ be a sequence of finite subsets of $X$ containing all singular points of $X$ such that every point of $X$ is within distance $\epsilon$ of some point in $A_\epsilon$.
\begin{enumerate}
    \item Every Voronoi cell is the Wijsman limit (see Definition \ref{def:wijsman}) of a sequence of Voronoi cells of $\{A_\epsilon\}_{\epsilon \searrow 0}$.
    \item If $X$ is not tangent to any circle in four or more points, then every maximal Delaunay cell is the Hausdorff limit (see Definition \ref{def:hausdorff}) of a sequence of Delaunay cells of $\{A_\epsilon\}_{\epsilon \searrow 0}$.
\end{enumerate}
\end{maintheorem}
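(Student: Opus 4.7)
The plan is to prove the two parts independently, using the $\epsilon$-density of $A_\epsilon$ in $X$ to transfer Voronoi/Delaunay structure between the finite and continuous settings.

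For part (1), I would fix $x \in X$ and choose $y_\epsilon \in A_\epsilon$ with $d(x, y_\epsilon) \le \epsilon$ (possible by hypothesis), then show that $V_{A_\epsilon}(y_\epsilon) \to V_X(x)$ in the Wijsman sense. Since Wijsman and Kuratowski convergence of closed sets agree in $\RR^2$, it suffices to verify the inner and outer limit conditions. The outer condition is a direct distance comparison: if $c_\epsilon \in V_{A_\epsilon}(y_\epsilon)$ with $c_\epsilon \to c$, then for any $x' \in X$ and $a_\epsilon \in A_\epsilon$ with $d(a_\epsilon, x') \le \epsilon$, the chain $d(c_\epsilon, y_\epsilon) \le d(c_\epsilon, a_\epsilon) \le d(c_\epsilon, x') + \epsilon$ passes to the limit to give $d(c, x) \le d(c, x')$, placing $c$ in $V_X(x)$. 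For the inner condition, given $c \in V_X(x)$, the bound $d(c, y_\epsilon) \le d(c, x) + \epsilon \le d(c, a) + \epsilon$ for every $a \in A_\epsilon$ puts $c$ into an $\epsilon$-slackened version of $V_{A_\epsilon}(y_\epsilon)$; a projection argument onto the true cell, together with the fact that the slackening shrinks as $\epsilon \to 0$, produces the required approximating sequence $c_\epsilon \to c$.

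For part (2), let $D = \conv(S)$ be a maximal Delaunay cell of $X$ with empty circumscribed ball $B$ through $S$. Because $B$ is empty, $\partial B$ must touch $X$ tangentially at each $s \in S$ (otherwise $X$ would cross into $B$ near $s$), so the hypothesis that no circle is tangent to $X$ at four or more points forces $|S| \le 3$, making $D$ an edge or a triangle. I would pick $s_\epsilon \in A_\epsilon$ within $\epsilon$ of each $s$ and set $D_\epsilon = \conv\{s_\epsilon : s \in S\}$. Hausdorff convergence $D_\epsilon \to D$ is immediate from $s_\epsilon \to s$, so the substantive step is showing that $D_\epsilon$ is itself a Delaunay cell of $A_\epsilon$ for all small $\epsilon$. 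Its circumscribed ball $B_\epsilon$ converges to $B$, and I would rule out points of $A_\epsilon$ in $\mathrm{int}(B_\epsilon)$ by splitting $X$ into two regions: outside small neighborhoods of $S$, the emptiness of $B$ and the compactness of $X$ provide a uniform positive distance from $B$ that survives the perturbation to $B_\epsilon$; near each $s$, the genericity hypothesis rules out higher-order contact between $\partial B$ and $X$, so a local normal-form analysis of the tangency at $s$ shows no nearby $X$-point falls into $\mathrm{int}(B_\epsilon)$.

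The main obstacle I anticipate is the local step in part (2): converting the qualitative hypothesis ``no four-point tangent circles'' into a uniform quantitative bound -- an $\epsilon_0 > 0$ below which $B_\epsilon \cap X = \{s_\epsilon : s \in S\}$ -- requires a careful contact analysis between $X$ and the perturbed family $\{B_\epsilon\}$ near each $s$. Without the hypothesis, a fourth tangency of $\partial B$ with $X$ could drift into $\mathrm{int}(B_\epsilon)$ and destroy the Delaunay property of $D_\epsilon$. Part (1) is comparatively routine once the slackened-cell construction is made precise, since it only involves Lipschitz distance comparisons rather than genericity-sensitive contact geometry.
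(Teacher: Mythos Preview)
Your plan takes a genuinely different route from the paper in both parts. The paper proves part~(1) by citing results of Brandt \cite{skeletons}: continuity of the map $x \mapsto Vor_X(x)$ for $C^2$ curves with closed skeleton (Lemma~\ref{lem:voronoi}), together with the containment $Vor_X(y_\epsilon) \subset Vor_{A_\epsilon}(y_\epsilon)$, immediately gives the inner limit. For part~(2) the paper lifts $X$ to the paraboloid, identifies maximal Delaunay cells with simplicial lower faces of $P^*_X$ having a unique exposing hyperplane (Proposition~\ref{prop:delaunay_faces}), and then invokes a convex-geometry result of Ranestad--Sturmfels \cite[Theorem~3.5]{nidhi} on Hausdorff convergence of such facets.

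There are real gaps in your direct approach. In part~(1), the slackening inequality $d(c,y_\epsilon)\le d(c,a)+\epsilon$ does \emph{not} by itself bound $d(c,V_{A_\epsilon}(y_\epsilon))$: if two sample points $y_\epsilon,a$ lie within $o(\epsilon)$ of each other near $x$, the slackened constraint from $a$ becomes vacuous while the true perpendicular-bisector constraint can push $V_{A_\epsilon}(y_\epsilon)$ entirely to one side of the (nearly normal) bisector. Your projection argument therefore needs exactly what the paper supplies---continuity of $Vor_X$ at $x$---to know that $Vor_X(y_\epsilon)\subset V_{A_\epsilon}(y_\epsilon)$ already contains points approaching $c$. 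You also never use the hypothesis $Sing(X)\subset A_\epsilon$, whereas Example~\ref{ex:cusp_converge} shows convergence fails without it; the paper handles this separately in Proposition~\ref{prop:singular_convergence} by taking $y_\epsilon=x$ at singular points.

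In part~(2), your key local claim is false: the hypothesis ``no circle tangent at four or more \emph{points}'' counts tangencies without multiplicity, so it does not exclude an inscribed circle $B$ that osculates $X$ (contact of order $\ge 4$) at one of its three tangent points. In that case $X-\partial B$ vanishes to fourth order at $s$, and any generic perturbation $B\to B_\epsilon$ will push a nearby arc of $X$ into $\mathrm{int}(B_\epsilon)$, so your chosen $D_\epsilon$ need not be Delaunay. The paraboloid lift sidesteps this because a triangular face in $\R^3$ has a unique supporting $2$-plane regardless of contact order, and the cited convex-hull convergence theorem produces the correct approximating facets automatically. Your proposal also omits the cases $|S|=2$ (where $B_\epsilon$ is not determined by the $s_\epsilon$) and $|S|=1$ (an osculating vertex), both of which the paper treats.
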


\begin{figure}[h]
\centering
\begin{subfigure}{.33\textwidth}
  \centering
  \includegraphics[width=\linewidth]{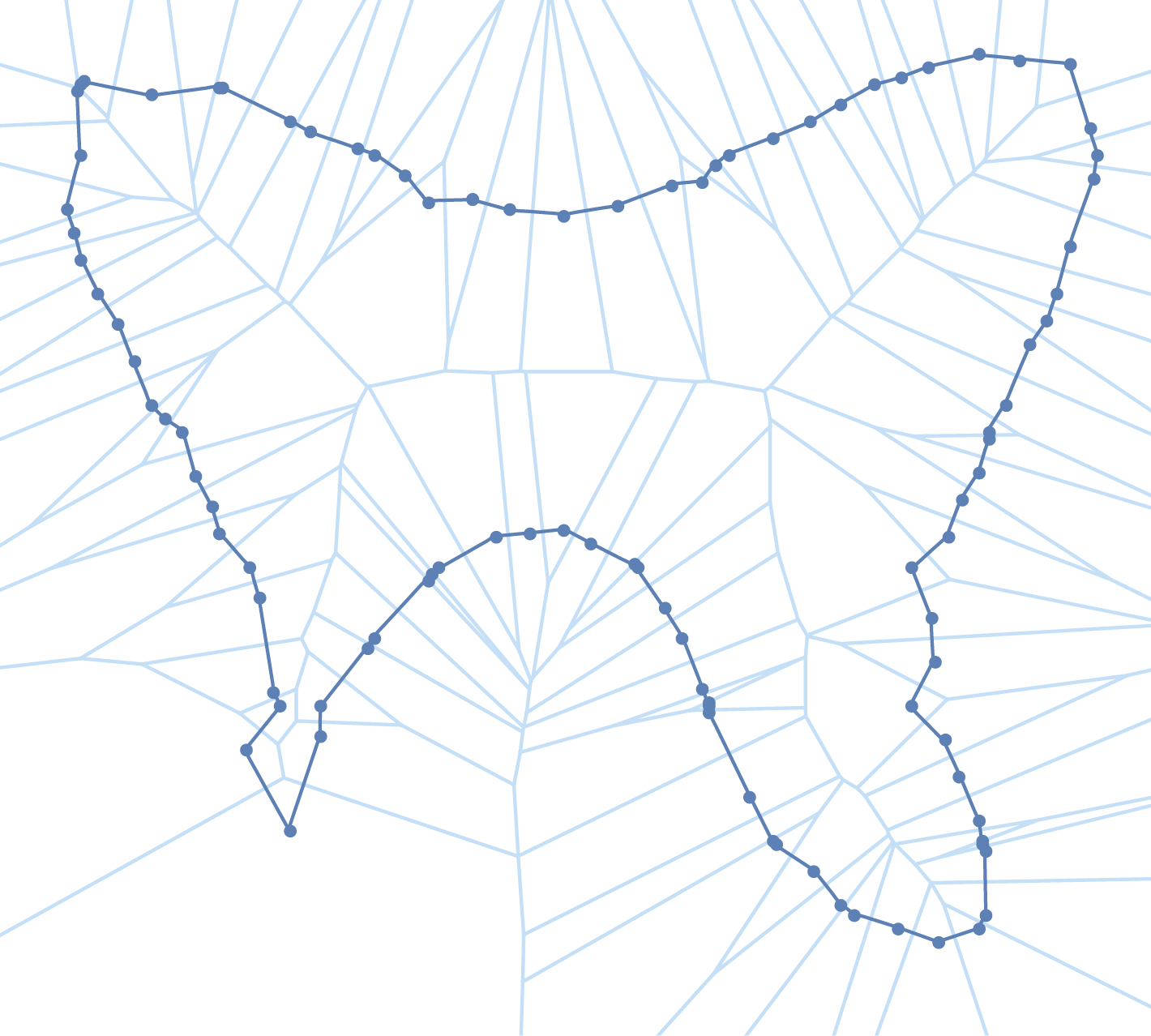}
\end{subfigure}%
\begin{subfigure}{.33\textwidth}
  \centering
  \includegraphics[width=\linewidth]{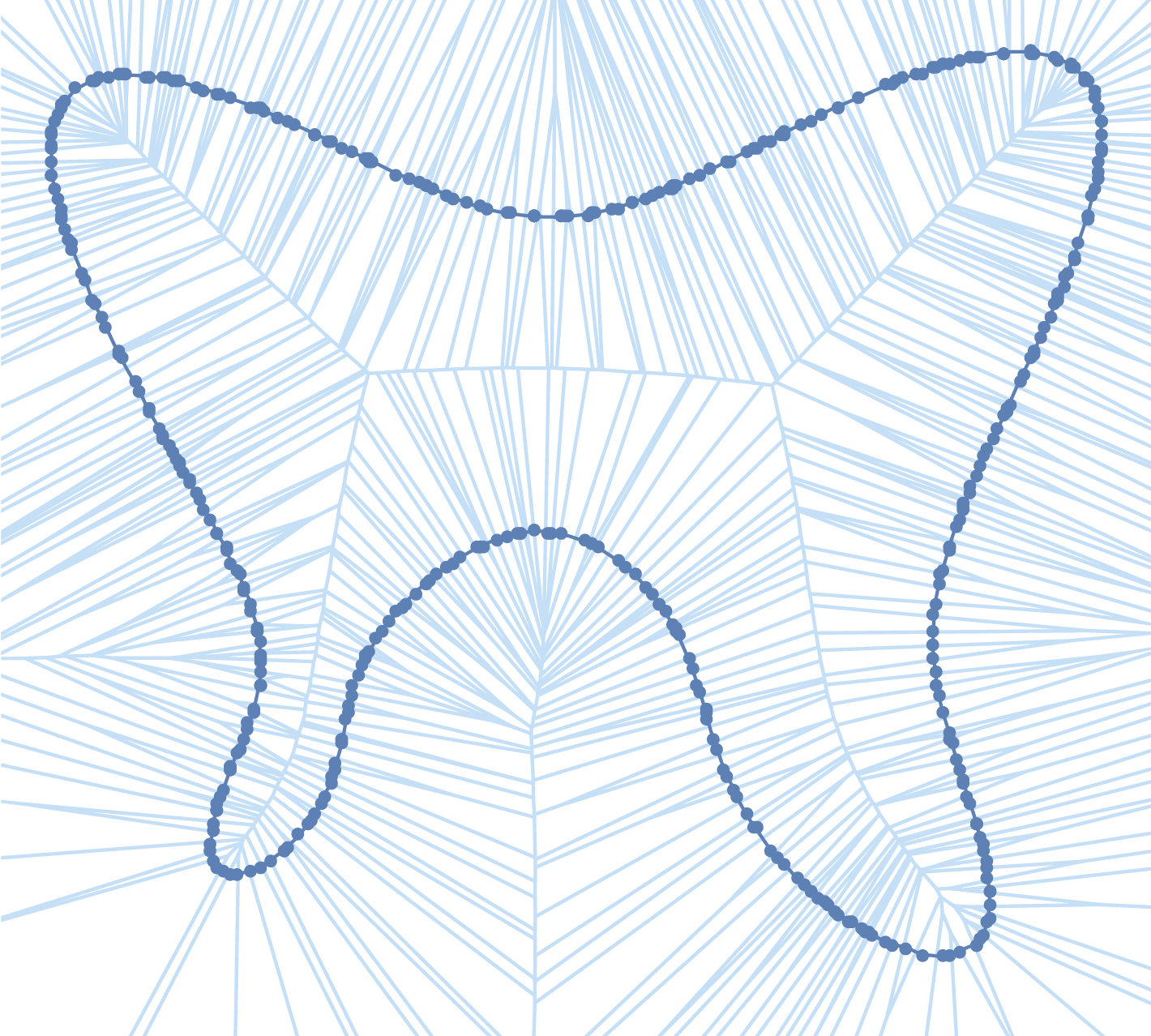}
\end{subfigure}
\begin{subfigure}{.33\textwidth}
  \centering
  \includegraphics[width=\linewidth]{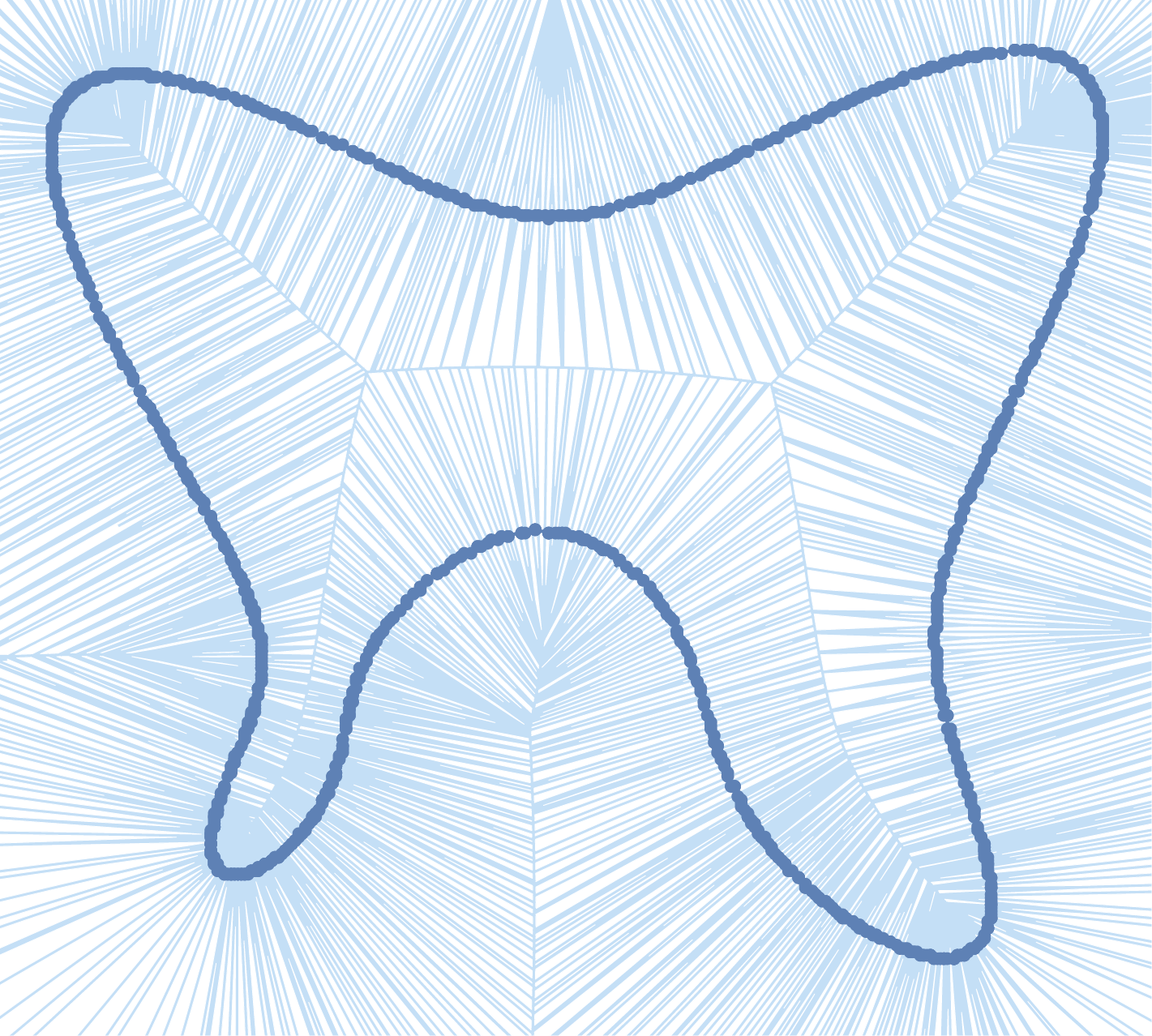}
\end{subfigure}
\caption{Voronoi cells of 101, 441, and 1179 points sampled from the butterfly curve.}
\label{fig:voronoi_butterfly}
\end{figure}

\begin{figure}[h]
\centering
\begin{subfigure}{.33\textwidth}
  \centering
  \includegraphics[width=\linewidth]{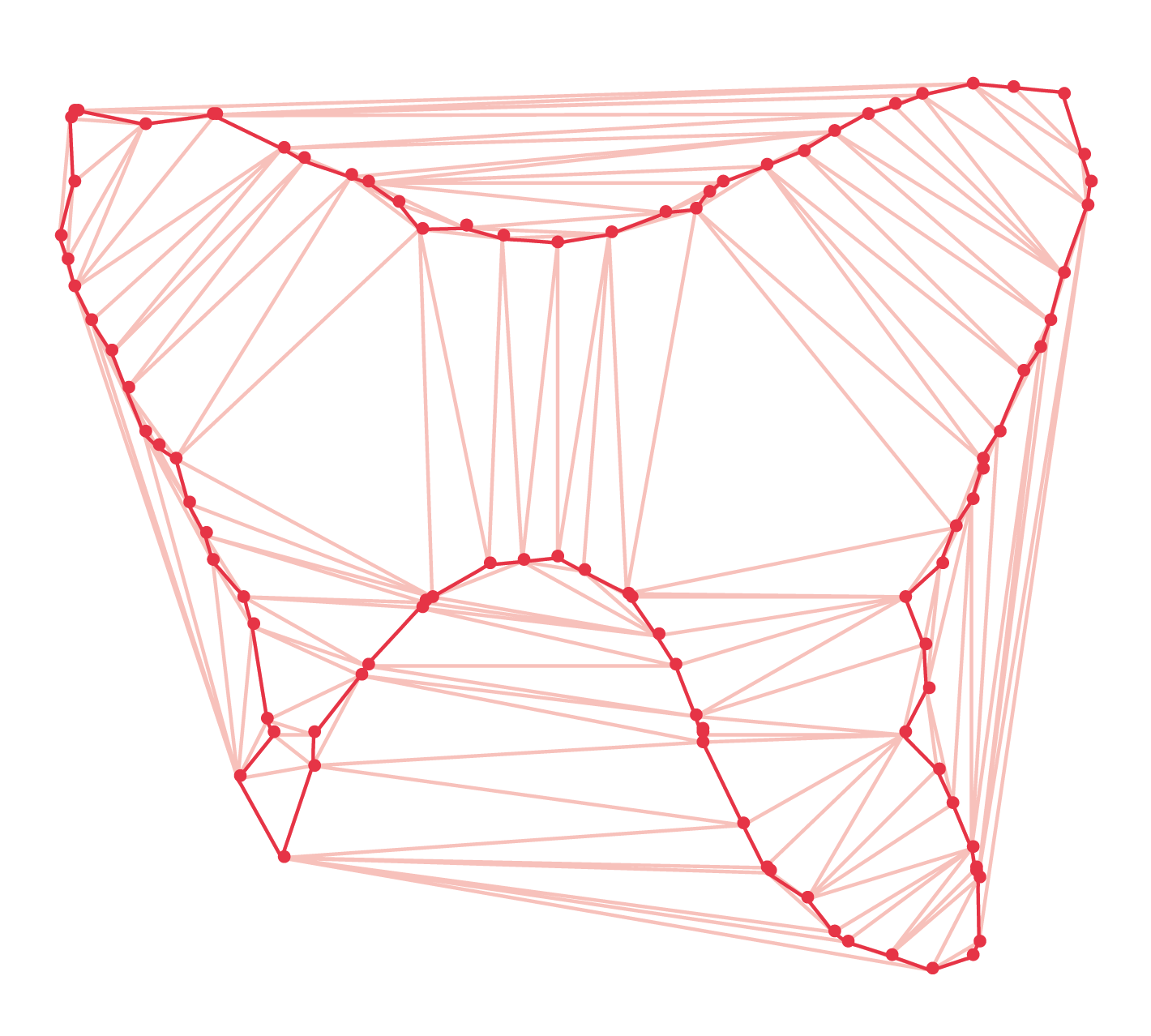}
\end{subfigure}%
\begin{subfigure}{.33\textwidth}
  \centering
  \includegraphics[width=\linewidth]{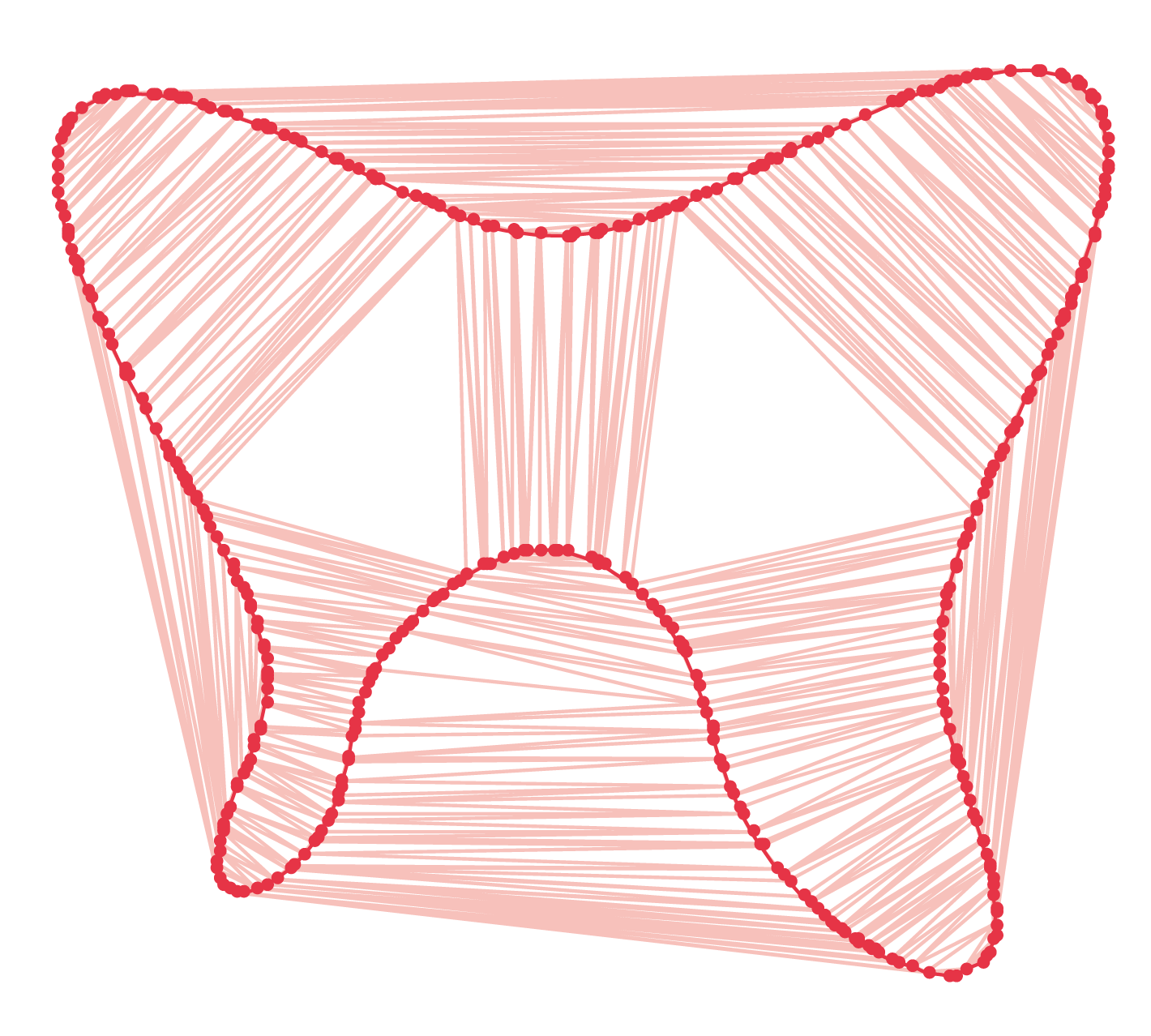}
\end{subfigure}
\begin{subfigure}{.33\textwidth}
  \centering
  \includegraphics[width=\linewidth]{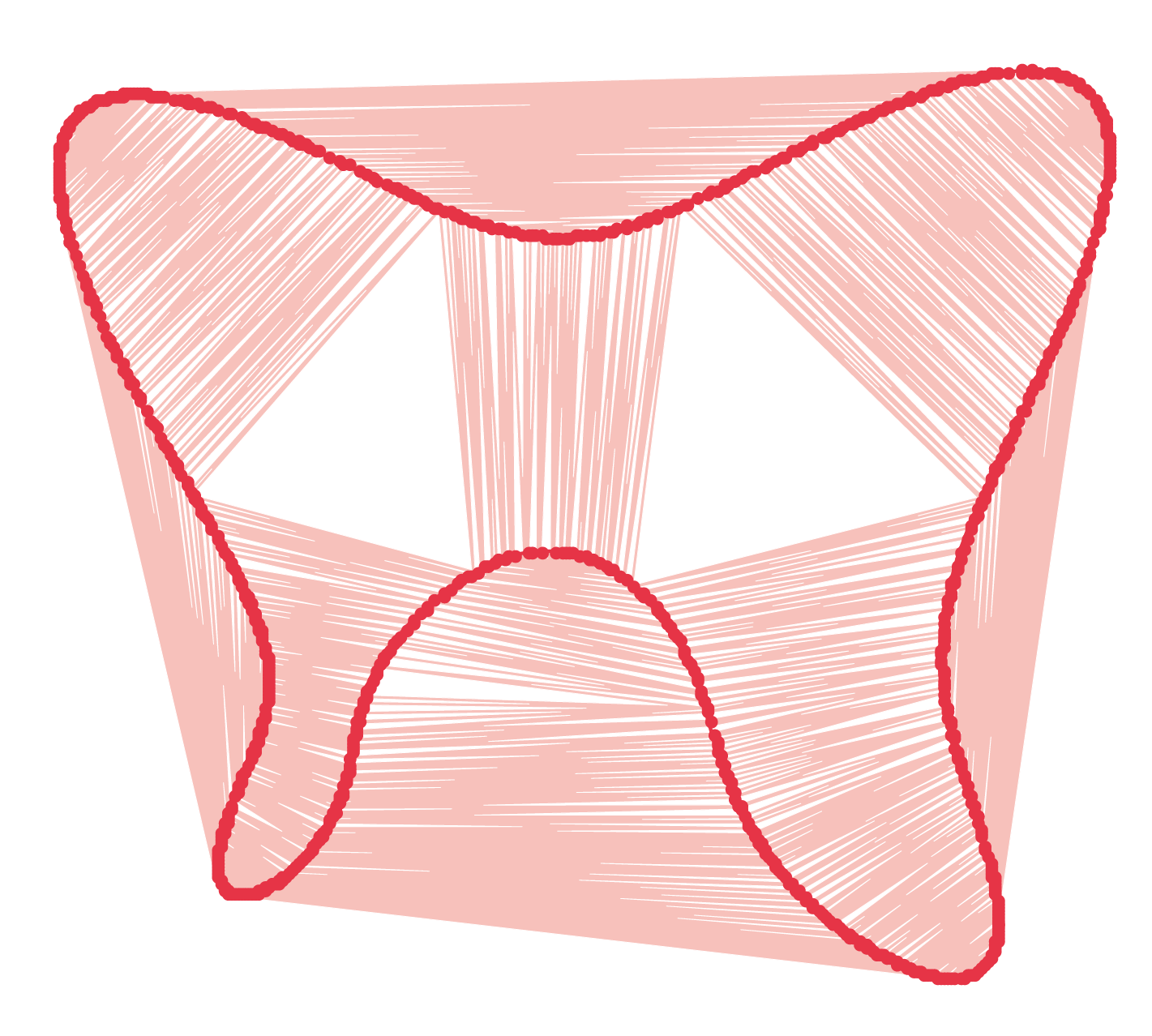}
\end{subfigure}
\caption{Delaunay cells of  101, 441, and  1179 points sampled from the butterfly curve. The two large triangles correspond to tritangent circles of the curve.}
\label{fig:delaunay_butterfly}
\end{figure}

Voronoi diagrams of finite point sets are widely studied and have seen applications across science and technology, most notably in the natural sciences, health, engineering, informatics, civics and city planning. For example in Victoria, a state in Australia, students are typically assigned to the school to which they live closest. Thus, the catchment zones for schools are given by a Voronoi diagram \cite{VSD}. 
Metric features of varieties, such as the medial axis and curvature of a point, can be detected from the Voronoi cells of points sampled densely from a variety.  Computational geometers frequently use Voronoi diagrams to approximate these features and reconstruct varieties \cite{skeletons, skeleton_approx, normal_approximation}.

The reach of an algebraic variety is an invariant that is important in applications of algebraic topology to data science. For example, the reach determines the number of sample points needed for the technique of persistent homology to accurately determine the homology of a variety \cite{NSW}. For an algebraic geometric perspective on the reach, see \cite{BKSW}. The \emph{medial axis} of a variety is the locus of points which have more than one nearest point on $X$. This gives the following definition of the reach. 

\begin{figure}[h!]
    \centering
    \includegraphics[height = 2 in]{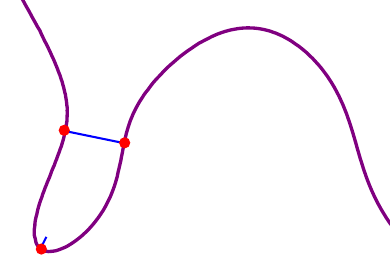}
    \caption{The reach of the butterfly curve is attained by the maximal curvature point in the lower left wing. The narrowest bottleneck is also shown. This figure is explained in Example \ref{ex:reach}.}
    \label{fig:reach}
\end{figure}

\begin{definition}
The \emph{reach} $\tau(X)$ of an algebraic variety $X \subset \mathbb{R}^n$ is the infimum of the set of distances from any point on $X$ to a point on the medial axis of $X$.
\end{definition}

The paper \cite{ACKMRW17} describes how the reach is the minimum of two quantities. 
We have 
\begin{equation}
\label{eqn:reach_min}
   \tau(X) = \min\left\{ q , \frac{\rho}{2}\right\}, 
\end{equation}
where $q$ is the minimum radius of curvature (Definition \ref{def:curvature}) of points in $X$ and $\rho$ is the narrowest bottleneck distance (Definition \ref{def:bottleneck_distance}). An example is depicted in Figure \ref{fig:reach}.

The paper is organized as follows. We begin with a systematic treatment in Section \ref{sec:limits} of convergence of Voronoi cells of increasingly dense point samples of a variety. Here, we also introduce Delaunay cells, which are dual to Voronoi cells but are compact, and thus able to exhibit Hausdorff convergence. This gives the proof of Theorem \ref{thm:convergence}, split among Theorems \ref{thm:delaunay_convergence} and \ref{thm:voronoi_convergence}, as well as Proposition \ref{prop:singular_convergence}, which treats the singular case separately. Theorem \ref{thm:convergence} is robust because it is not affected by the distribution of the point sample.
Theorem \ref{thm:convergence} provides the theoretical foundations for estimating metric features of a variety from a point sample. We do this for the medial axis (Section~\ref{sec:medial_axis}), curvature and evolute (Section~\ref{sec:evolute}), bottlenecks (Section~\ref{sec:bottlenecks}) and reach (Section~\ref{sec:reach}). For each of these metric features, we first give defining equations and where possible a formula for the degree. We then turn our attention to detecting information about a real plane curve $X$ from its Voronoi cells. For each metric feature, we state a theoretical result about how to detect the feature from the Voronoi cells of $X$ or a subset of $X$. Corollaries to Theorem~\ref{thm:convergence} provide convergence results for these features. The overall aim is to give a path to compute the metric features of an algebraic plane curve $X$ from Voronoi cells of dense point samples of $X$. We use the \emph{butterfly curve}
\begin{equation}
\label{eqn:butterfly}
   b(x,y) = x^4 - x^2 y^2 + y^4 - 4 x^2 - 2 y^2 - x - 4 y + 1 
\end{equation}
in our examples. 
 In computational geometry and data science, these problems are often considered when there is noise in the sample. In this paper we assume that our samples lie precisely on the curve $X$.


\section{Voronoi and Delaunay Cells of Varieties and Their Limits}\label{sec:limits}

Let $X \subset \mathbb{R}^n$ be a nonempty \textit{real algebraic variety,} that is, the zero locus of a set of polynomial equations with real coefficients. We note that this definition allows a variety to be reducible. We call $X$ a \textit{curve} if it is of dimension $1$ and say $X$ is \textit{smooth} if the real locus is smooth.  Let $d(x,y)$ denote the Euclidean distance between two points $x,y \in \mathbb{R}^n$.
\begin{definition}
The \emph{Voronoi cell of $x \in X$} is
$$
Vor_X(x) = \{y \in \mathbb{R}^n\ |\ d(y,x) \leq d(y,x') \text{ for all }x' \in X\}.
$$
\end{definition}
An example of a Voronoi cell is given in Figure \ref{fig:ellipse_ex}. 
This is a convex semialgebraic set whose dimension is equal to $codim(X)$ so long as $x$ is a smooth point of $X$. It is contained in the \emph{normal space to $X$ at $x$}:
$$
N_X(x) = \{u \in \mathbb{R}^n \ |\ u-x\text{ is perpendicular to the tangent space of }X\text{ at }x\}. 
$$
The topological boundary of the Voronoi cell $Vor_X(x)$ consists of the points in $\mathbb{R}^n$ that have two or more closest points in $X$, one of which is $x$. The collection of boundaries of Voronoi cells is described as follows.
\begin{definition}
\label{def:medial}
The \emph{medial axis} $M(X)$ of an algebraic variety $X \subset \mathbb{R}^n$ is the collection of points in $\mathbb{R}^n$ that have two or more closest points in $X$. An example of the medial axis is given in Figure \ref{fig:ellipse_ex}. 
\end{definition}
 
Let $B(p,r)$ denote the open disc with center $p \in \mathbb{R}^n$ and radius $r > 0$. We say this disc is \emph{inscribed} with respect to $X$ if $X \cap B(p,r) = \emptyset$ and we say it is \emph{maximally inscribed} if no disc containing $B(p,r)$ shares this property.
Each inscribed disc gives a Delaunay cell, defined as follows.

\begin{definition}\label{def:delaunay}
Given an inscribed disc $B$ of an algebraic variety $X \subset \mathbb{R}^n$, 
the \emph{Delaunay cell $\text{Del}_X(B)$} 
is
$
conv(\overline{B} \cap X).
$ 
An example of a Delaunay cell and the corresponding maximally inscribed disc is given in Figure \ref{fig:ellipse_ex}. 
\end{definition}

\begin{figure}[h]
    \centering
    \includegraphics[height=1.5 in]{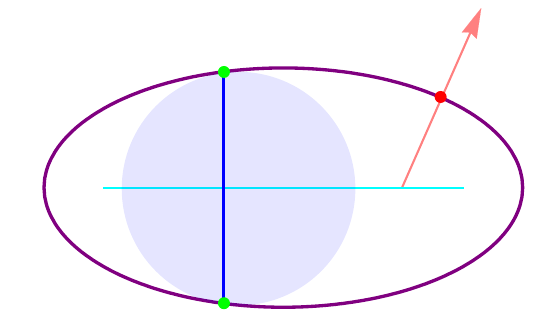}
    \caption{The ellipse $
(x/2)^2 + y^2 -1 = 0
$
is shown in purple. The Voronoi cell of the red point $(\sqrt{7}/2,3/4)$ is shown in pink. It is a ray starting at the point $(3\sqrt{7}/8,0)$ in the direction $(\sqrt{7}/4,3/2)$. The dark blue line segment between the points $(-1/2, \sqrt{15}/4)$ and $(-1/2, -\sqrt{15}/4)$ is a Delaunay cell defined by the light blue maximally inscribed circle with center $(-3/8,0)$ and radius $\sqrt{61}/8$. The light blue line is the medial axis, which goes from $(-3/2,0)$ to $(3/2,0)$ because the curvature at the points $(-2,0)$ and $(2,0)$ is 2.
}
    \label{fig:ellipse_ex}
\end{figure}

\begin{remark}
For plane curves, the collection of centers of all inscribed spheres which give maximal Delaunay cells (Delaunay cells which are not contained in any other Delaunay cell) is the Euclidean closure of the medial axis. Points of an algebraic plane curve $X$ which are themselves maximal Delaunay cells are points of $X$ with locally maximal curvature. In this case, the maximally inscribed circle is an \emph{osculating circle}, see Definition \ref{def:curvature}.
\end{remark}

We now describe two convex sets whose face structures encode the Delaunay and Voronoi cells of $X$.
We embed $\mathbb{R}^n$ in $\mathbb{R}^{n+1}$ by adding a coordinate. We usually imagine that this last coordinate points vertically upwards.
So, we say that $x \in \mathbb{R}^{n+1}$ is below $y \in \mathbb{R}^{n+1}$ if $x_{n+1} \leq y_{n+1}$ and all other coordinates are the same. 
 Let 
$$
U = \{ x \in \mathbb{R}^{n+1} \ |\ x_{n+1} = x_1^2 + \cdots + x_n^2 \}
$$
be the \emph{standard paraboloid} in $\mathbb{R}^{n+1}$. 
If $p \in \mathbb{R}^n,$ then let $p_U = (p,||p||^2)$ denote its lift to $U$.

Given a convex set $C \subset \mathbb{R}^{n+1}$, a convex subset $F \subset C$ is called a \emph{face} of $C$ if for every $x \in F$ and every $y,z \in C$ such that $x \in conv(y,z),$ we have that $y,z \in F$. We say that a face $F$ is \emph{exposed} if there exists an \emph{exposing hyperplane} $H$ such that $C$ is contained in one closed half space of the hyperplane and such that $F = C \cap H$. We call an exposed face $F$ a  \emph{lower exposed face} of $C$ if there is an exposing hyperplane lying below $C$.

\begin{definition}
\label{def:lifted_delaunay}
The \emph{Delaunay lift of an algebraic variety $X \subset \mathbb{R}^n$} is the convex set 
$$
P^*_X = conv(x_u\ |\ x \in X) + \{(0,\ldots,0,\lambda)\ :\ \lambda \in \mathbb{R}_{\geq 0}\} \subset \mathbb{R}^{n+1},
$$
where we recall that $x_u = (x , || x || ^2)$ and use $+$ to denote the Minkowski sum.
 The Delaunay lift of the butterfly curve is shown in Figure \ref{fig:lifted_delaunay}.
\end{definition}

\begin{figure}[h]
\centering
\begin{minipage}{.55\textwidth}
  \centering
  \includegraphics[width=.8\linewidth]{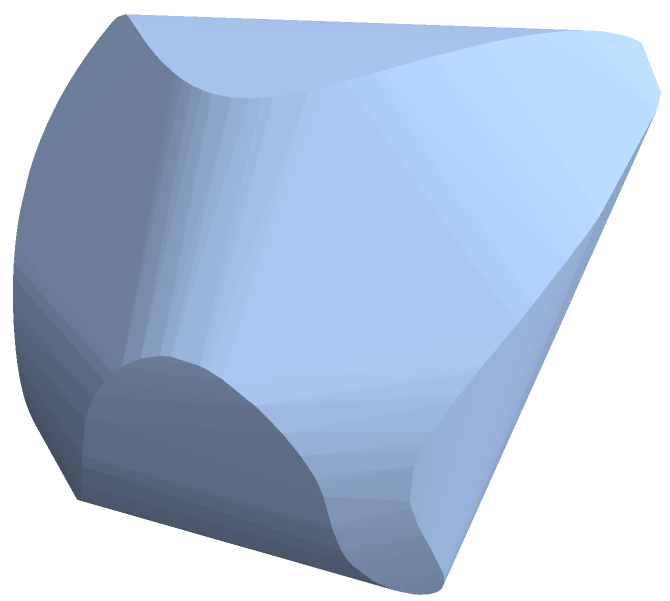}
  \captionof{figure}{The Delaunay lift (Definition \ref{def:lifted_delaunay}) of the butterfly curve, viewed from below.}
  \label{fig:lifted_delaunay}
\end{minipage}%
\begin{minipage}{.45\textwidth}
  \centering
  \includegraphics[width=.8\linewidth]{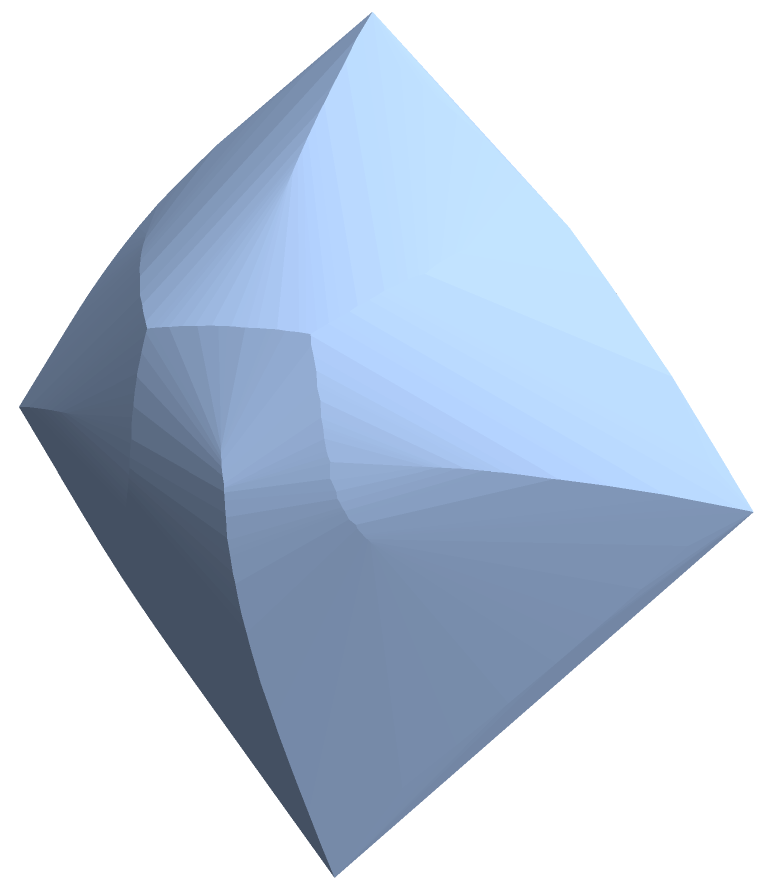}
  \captionof{figure}{The Voronoi lift (Definition \ref{def:lifted_voronoi}) of the butterfly curve, viewed from below.}
  \label{fig:lifted_voronoi}
\end{minipage}
\end{figure}

We now study how the lower exposed faces of the Delaunay lift $P^*_X$ project to $conv(X)$, and give the Delaunay cells of $X$.

\begin{proposition}
\label{prop:delaunay_faces}
Let $X \subset \mathbb{R}^n$ be an algebraic variety. Let $\pi: \mathbb{R}^{n+1} \rightarrow \mathbb{R}^n$ be the projection onto the first $n$ coordinates. A subset $F \subset P^*_X$ is a lower exposed face if and only if $\pi(F)$ is a Delaunay cell of $X$. Furthermore, if $H_F$ is the hyperplane exposing $F$, then $\pi(U\cap H_F)$ is an inscribed sphere of $X$ and $\pi(F) = \text{Del}_X(\pi(U\cap H_F)).$
\end{proposition}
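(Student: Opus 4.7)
The plan is to prove both directions using the classical paraboloid lifting dictionary between non-vertical hyperplanes in $\mathbb{R}^{n+1}$ and spheres in $\mathbb{R}^n$. The key algebraic identity is that for any $a \in \mathbb{R}^n$ and $b \in \mathbb{R}$, a point $x_U = (x, \|x\|^2)$ satisfies
\[
\|x\|^2 \;\gtreqless\; 2\langle a, x\rangle + b \;\;\Longleftrightarrow\;\; \|x - a\|^2 \;\gtreqless\; b + \|a\|^2,
\]
so the affine hyperplane $H_{a,b} = \{x_{n+1} = 2\langle a,x\rangle + b\}$ intersects the paraboloid $U$ in the lift of the sphere of squared radius $r^2 = b + \|a\|^2$ centered at $a$, and $x_U$ lies strictly above $H_{a,b}$ exactly when $x$ lies outside the closed disc $\overline{B(a,r)}$. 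I would open with this setup in a single paragraph.

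Next I would argue that every lower exposing hyperplane of $P^*_X$ is of the form $H_{a,b}$. Because the recession cone of $P^*_X$ contains the ray $\mathbb{R}_{\geq 0}e_{n+1}$, any supporting hyperplane has an outer normal with non-positive $(n+1)$-coordinate; a lower exposing hyperplane by definition has its outer normal pointing strictly downward, so after normalization the hyperplane is non-vertical and can be written as $H_{a,b}$. Since $P^*_X = \mathrm{conv}(x_U : x \in X) + \mathbb{R}_{\geq 0}e_{n+1}$ lies weakly above $H_{a,b}$, the inequality $\|x - a\|^2 \geq b + \|a\|^2$ holds for every $x \in X$. In particular $b + \|a\|^2 \geq 0$, so we set $r = \sqrt{b+\|a\|^2}$, and then $X \cap B(a,r) = \emptyset$, i.e., $B(a,r)$ is inscribed. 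The face $F = P^*_X \cap H_{a,b}$ consists precisely of convex combinations of those $x_U$ with $x \in X \cap \overline{B(a,r)}$ (the upward ray contributes nothing since it points away from $H_{a,b}$), so $\pi(F) = \mathrm{conv}(X \cap \overline{B(a,r)}) = \mathrm{Del}_X(B(a,r))$, which also identifies $\pi(U \cap H_F)$ as the bounding sphere.

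For the converse, given any inscribed disc $B(a,r)$ of $X$, define $b = r^2 - \|a\|^2$ and consider $H_{a,b}$. The inscribed condition says $\|x - a\|^2 \geq r^2$ for all $x \in X$, which by the key identity translates to $x_U$ lying weakly above $H_{a,b}$; hence $H_{a,b}$ is a supporting hyperplane of $\mathrm{conv}(x_U : x \in X)$, and since its outer normal is $(2a, -1)$ it is also supporting for $P^*_X$ and is lower exposing. The exposed face is $F = \mathrm{conv}(x_U : x \in X \cap \overline{B(a,r)})$, which projects to $\mathrm{Del}_X(B(a,r))$, completing the equivalence.

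The content is largely bookkeeping once the lifting identity is in hand; the only place requiring care is the role of the Minkowski summand $\mathbb{R}_{\geq 0}e_{n+1}$. I expect this to be the main (minor) obstacle: one must verify that it does not spuriously create or destroy lower faces, which amounts to the observation that adding the upward ray to the recession cone forces every lower exposing hyperplane to be non-vertical, while leaving the actual lower envelope of $\mathrm{conv}(x_U : x \in X)$ and its face lattice unchanged. Once this is noted, the proposition follows directly from the two directions above.
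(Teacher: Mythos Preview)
Your proposal is correct and follows essentially the same route as the paper, namely the classical paraboloid-lifting correspondence between non-vertical hyperplanes in $\mathbb{R}^{n+1}$ and spheres in $\mathbb{R}^n$. The paper imports this correspondence from a reference rather than writing out the identity $\|x\|^2 - 2\langle a,x\rangle - b = \|x-a\|^2 - (b+\|a\|^2)$ explicitly, and it is less careful than you are about the Minkowski summand $\mathbb{R}_{\geq 0}e_{n+1}$ forcing lower exposing hyperplanes to be non-vertical; otherwise the two arguments coincide.
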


\begin{proof}
The map from $\mathbb{R}^n \rightarrow \mathbb{R}^{n+1}$ defined by $x \mapsto x_U= (x,||x||^2)$ lifts every sphere in $\mathbb{R}^n$ to the intersection of a hyperplane $H$ with $U$ \cite[Proposition 7.17]{joswig}. Moreover, the projection of the intersection of any hyperplane $H$ (satisfying $\pi(H) = \mathbb{R}^n$) with 
$U$ gives a sphere in $\mathbb{R}^n$ \cite[Proposition 7.17]{joswig}.

Given a Delaunay cell $\text{Del}_X(B)$ for some inscribed sphere $B$, we have that $P^*_X$ lies above the corresponding hyperplane $H$. This is because any points below $H$ would project to points in $X$ lying inside of $B$, contradicting the condition that $X \cap B = \emptyset$ for an inscribed disc $B$. So, $H$ is the exposing hyperplane of the face $(Del_X(B))_U$.

Suppose $F \subset P^*_X$ is a lower exposed face with exposing hyperplane $H_F$. The interior of the sphere $\pi(U\cap H_F)$ contains no points of $X$, because if it did contain a point $x$, then $x_U$ would lie in the lower half-space of $H_F$, which does not intersect $P^*_X$. Then $\pi(U\cap H_F)$ is the inscribed disc corresponding to a Delaunay cell. 

Since $\pi(U\cap H_F)$ is a sphere, we have $\text{Del}_X(\pi(U\cap H_F)) = conv(\pi(U\cap H_F)\cap X)$. Let $X_U$ denote the lift of $X$ to $\mathbb{R}^{n+1}$. Then $\pi(U\cap H_F)\cap X = \pi(U\cap H_F\cap X_U) = \pi(H_F \cap X_U),$ and so $\text{Del}_X(\pi(U\cap H_F)) = conv(\pi(H_F \cap X_U)) = \pi(conv(H_F \cap X_U)) = \pi(F)$.
\end{proof}

We may define a convex set whose faces project down to the Voronoi cells as follows. For any point $x \in X$, let $T(x)$ denote the hyperplane in $\mathbb{R}^{n+1}$ through $x_U =  (x,||x||^2)$ tangent to the paraboloid $U$. Let $T(x)^+$ be the closed half-space consisting of all points in $\mathbb{R}^{n+1}$ lying above the hyperplane $T(x)$.
\begin{definition}
\label{def:lifted_voronoi}
The \emph{Voronoi lift of an algebraic variety $X \subset \mathbb{R}^n$} is the convex set
$P_X = \cap_{x \in X} T(x)^+$.
The Voronoi lift of the butterfly curve is shown in Figure \ref{fig:lifted_voronoi}. 
\end{definition}

The lower exposed faces of the Voronoi lift $P_X$ project to Voronoi cells of $X$, as we now show.

\begin{proposition}
\label{prop:voronoi_faces}
Let $X \subset \mathbb{R}^n$ be an algebraic variety. Let $\pi: \mathbb{R}^{n+1} \rightarrow \mathbb{R}^n$ be the projection onto the first $n$ coordinates. A subset $F$ of the Voronoi lift $P_X$ is an exposed face of $P_X$ if and only if $\pi(F)$ is a Voronoi cell of $X$. Furthermore, if $H_F$ is the hyperplane exposing $F$ and $U\cap H_F \neq \emptyset$,  then $U\cap H_F$ is a point and 
$
\pi(F) = \text{Vor}_X(\pi(U \cap H_F)).
$
\end{proposition}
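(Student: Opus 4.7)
The plan is to mirror the proof of Proposition~\ref{prop:delaunay_faces}, using the paraboloid lifting to translate the combinatorial statement about faces into the analytic statement about nearest points on $X$. First I would make the tangent plane at $x_U = (x, \|x\|^2)$ explicit as
\[
T(x) = \{(y, y_{n+1}) : y_{n+1} = 2 x \cdot y - \|x\|^2\},
\]
and record the completion-of-squares identity
\[
y_{n+1} - \bigl(2 x \cdot y - \|x\|^2\bigr) = \bigl(y_{n+1} - \|y\|^2\bigr) + \|y - x\|^2.
\]
Setting $y_{n+1} = \|y\|^2$ shows $U \subseteq T(x)^+$ for every $x$, hence $U \subseteq P_X$. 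Reading the identity at $(y, y_{n+1}) = (p, t)$ shows $(p, t) \in T(x)^+$ if and only if $\|p - x\|^2 \geq \|p\|^2 - t$, and intersecting over $x \in X$ identifies $P_X$ with the epigraph of $f(p) := \|p\|^2 - d(p, X)^2$.

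For the forward direction, fix $x \in X$ and set $F_x := P_X \cap T(x)$. Since $T(x)^+$ is one of the defining half-spaces of $P_X$, the plane $T(x)$ supports $P_X$ and exposes $F_x$. A point $(p, t) \in F_x$ satisfies both $t = 2 p \cdot x - \|x\|^2$ and $t \geq \|p\|^2 - d(p, X)^2$, which together force $\|p - x\|^2 \leq d(p, X)^2$; hence $x$ is a nearest point of $X$ to $p$ and $\pi(F_x) = \text{Vor}_X(x)$. Strict convexity of $U$ gives $T(x) \cap U = \{x_U\}$, which proves the "furthermore" clause.

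For the converse, let $F$ be an exposed face of $P_X$ with exposing hyperplane $H_F$, and suppose $H_F \cap U \neq \emptyset$. Because $U \subseteq P_X$, the hyperplane $H_F$ also supports $U$; by strict convexity of the paraboloid, $H_F \cap U$ must be a single point $y_U$, and $H_F$ is therefore the tangent plane $T(y)$. The inclusion $P_X \subseteq T(y)^+$ then translates via the identity above into $\|p - y\| \geq d(p, X)$ for every $p \in \mathbb{R}^n$; specializing to $p = y$ gives $d(y, X) = 0$, and closedness of the algebraic variety $X$ yields $y \in X$. The forward direction now identifies $F$ with $F_y$ and $\pi(F) = \text{Vor}_X(y)$.

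The step I expect to be the main obstacle is the degenerate case $H_F \cap U = \emptyset$, where $H_F$ supports the paraboloid strictly from below. Such hyperplanes arise as affine combinations of several tangent planes $T(x_1), \dots, T(x_k)$ and expose common boundary faces $F_{x_1} \cap \dots \cap F_{x_k}$, whose projections are intersections of Voronoi cells rather than Voronoi cells of any single point; handling these cleanly requires either broadening the notion of Voronoi cell to include these shared faces, or restricting attention to the extremal exposed faces where $H_F$ is itself a tangent plane. Either way, the analytic core of the proof is the completion-of-squares identity recorded in the first paragraph.
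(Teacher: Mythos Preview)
Your proof is correct and follows essentially the same route as the paper's: both arguments identify $P_X\cap T(x)$ as the exposed face projecting to $\mathrm{Vor}_X(x)$ via the paraboloid lifting, and both recover $y\in X$ from a supporting hyperplane that meets $U$. The paper is terser, citing \cite[Lemma~6.11]{joswig} for the fact that the vertical gap between $p_U$ and its image on $T(x)$ equals $d(p,x)^2$, whereas you derive this explicitly as the completion-of-squares identity and go on to describe $P_X$ as the epigraph of $\|p\|^2-d(p,X)^2$; this makes your argument self-contained and slightly more transparent.

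Your concern about the degenerate case $H_F\cap U=\emptyset$ is well-placed, and you should know that the paper's proof does not address it either: its second paragraph begins by \emph{assuming} $H_F\cap U\neq\emptyset$, just as you do. In effect both proofs establish the ``furthermore'' clause fully and the biconditional only for faces exposed by hyperplanes touching $U$. The faces exposed by hyperplanes missing $U$ are, as you say, intersections $\mathrm{Vor}_X(x_1)\cap\cdots\cap\mathrm{Vor}_X(x_k)$ lying in the medial axis, and the statement as literally written is slightly imprecise on this point; your reading---restricting attention to the extremal exposed faces where $H_F$ is tangent to $U$---is the intended one.
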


\begin{proof}
For some point $x \in X$, consider $P_X \cap T(x)$. Let $p \in \mathbb{R}^n$. There exists $p' \in T(x)$ with $\pi(p_U) = \pi(p')$. The distance from  $p_U$ to the point $p'$ is the square of the distance $d_{\mathbb{R}^n}(\pi(p),\pi(x))$ \cite[Lemma 6.11]{joswig}.
Therefore, $P_X \cap T(x)$ consists of those points $p'$ for which the distance $d_{\mathbb{R}^n}(p,x)$ is minimal over all $x \in X$. In other words, ${\pi(P_X \cap T(x)) = Vor_X(x)}$. 

Suppose $F \subset P_X$ is an exposed face with exposing hyperplane $H_F$ such that $U\cap H_F \neq \emptyset$. Let $p \in U\cap H_F$. Since $U \subset P_X$ we have that $p \in P_X$. Then, $p \in F = H_F \cap P_X$. This implies $H_F$ is the tangent hyperplane to $U$ at the point $p$, so in particular, $p=U \cap H_F$. Since $p$ is on the boundary of $P_x$, we have $\pi(p)\in X$ and $T(\pi(p)) = H_F$. We have $\pi(F)= \pi(P_X \cap T(\pi(p))) =  Vor_X(\pi(p)) = Vor_X(\pi(U \cap H_F))$, where in the second equality we use the result in the preceding paragraph. 
\end{proof}

There is a sense in which the Voronoi lift $P_X$ and the Delaunay lift $P^*_X$ are dual. We now describe this relationship. Suppose that $X$  is not contained in any proper linear subspace of $\mathbb{R}^n$. This implies that $P_X$ is pointed, meaning it does not contain a line. Therefore, it is projectively equivalent to a compact set \cite[Theorem 3.36]{joswig}. Embed $\mathbb{R}^{n+1}$ into $\mathbb{P}^{n+1}$ by the map 
$$
\iota (x_1, \ldots, x_{n+1}) = (1:x_1 : \cdots : x_{n+1}).
$$
Let $l$ be the transformation of $\mathbb{P}^{n+1}$ defined by the following $(n+2) \times (n+2)$ matrix
$$
\begin{bmatrix}
1 & 0 & \cdots & 0 & 1 \\
0 & 2 & 0 & \cdots & 0 \\
\vdots & 0 & \ddots & 0 & \vdots \\
0 & \cdots & 0 & 2 & 0 \\
-1 & 0 & \cdots & 0 & 1
\end{bmatrix}.
$$
Then by \cite[Lemma 7.1]{joswig} the projective transformation $l$ maps $U$ to the sphere $S \subset \mathbb{R}^{n+1}$. The tangential hyperplane at the north pole $(1: 0 : \cdots : 0 : 1)$ is the image of the hyperplane at infinity. Moreover, the topological closure of $l(P_X)$ is a compact convex body so long as the origin is in the interior of $P^*_X$. In this case, we call the convex body $l(P_X)$ the \emph{Voronoi body}. The Voronoi body is full dimensional and contains the origin in its interior. Its polar dual
$$
l(P_X)^\circ := \left\{ y \in \mathbb{R}^{n+1} \ :\ \sum_{i=1}^n x_i y_i \leq 1\text{ for all } x \in \pi(P_X)  \right\}
$$ 
is also full dimensional and has the origin in its interior. If we apply $l^{-1}$ to $l(P_X)^\circ$ we obtain an unbounded polyhedron, which is exactly the Delaunay lift $P^*_X$ of $X$. For more details, see \cite{joswig}.

We now study convergence of Voronoi and Delaunay cells. More precisely, given a real algebraic curve $X$ and a sequence of samplings $A_N \subset X$ with $|A_N| = N$, we show that Voronoi (or Delaunay) cells from the Voronoi (or Delaunay) cells of the $A_N$ limit to Voronoi (or Delaunay) cells of $X$. We begin by introducing two notions of convergence which describe the limits.

The \emph{Hausdorff distance} of two compact sets $B_1$ and  $B_2$ in $\mathbb{R}^n$ is defined as
$$
d_h(B_1,B_2) := \sup \left \{  \adjustlimits \sup_{x \in B_1} \inf_{y \in B_2} d(x,y), \adjustlimits \sup_{y \in B_2} \inf_{x \in B_1} d(x,y) \right \}.
$$
More intuitively, we can define this distance as follows. If an adversary gets to put your ice cream on either set $B_1$ or $B_2$ with the goal of making you go as far as possible, and you get to pick your starting place in the opposite set, then $d_h(B_1,B_2)$ is the farthest the adversary could make you walk in order for you to reach your ice cream.

\begin{definition}
\label{def:hausdorff}
A sequence $\{B_\nu\}_{\nu\in \mathbb{N}}$ of compact sets is \emph{Hausdorff convergent} to $B$ if $d_h(B,B_\nu) \rightarrow 0$ as $\nu \rightarrow \infty$.

\end{definition}
\begin{definition}
\label{def:wijsman}
Given a point $x \in \mathbb{R}^n$ and a closed set $B \subset \mathbb{R}^n$, define
$$
d_w(x,B) = \inf_{b \in B} d(x,b).
$$
A sequence $\{B_\nu\}_{\nu\in \mathbb{N}}$ of compact sets is  \emph{Wijsman convergent} to $B$ if for every $x \in \mathbb{R}^n$, we have that 
$$
d_w(x,B_\nu) \rightarrow d_w(x,B).
$$
\end{definition}

An \emph{$\epsilon$-approximation} of a real algebraic variety $X$ is a discrete subset $A_\epsilon \subset X$ such that for all $y\in X$ there exists an $x \in A_\epsilon$ so that $d(y,x) \leq \epsilon$.
By definition, when $X$ is compact a sequence of $\epsilon$-approximations as $\epsilon$ approaches 0 is Hausdorff convergent to $X$. For all $X$, a sequence of $\epsilon$-approximations as $\epsilon$ approaches 0 is Wijsman convergent to $X$.
We use Wijsman convergence as a variation of Hausdorff convergence which is well suited for unbounded sets. Delaunay cells are always compact, while Voronoi cells may be unbounded. 

We now study convergence of Delaunay cells of $X$, and introduce a condition on real algebraic varieties which ensures that the Delaunay cells are simplices.

\begin{definition}
\label{def:generic}
We say that an algebraic variety $X\subset \mathbb{R}^n$ is \emph{Delaunay-generic} if $X$ does not meet the closure of any maximally inscribed disc at greater than $n+1$ points.
\end{definition}

\begin{example}
 The standard paraboloid $U$ in any dimension $n+2$ is not Delaunay-generic because it contains $n$-spheres. 
\end{example}

Although the focus of this paper is on algebraic curves in $\mathbb{R}^2$, we state the following theorem for curves in $\mathbb{R}^n$ because the proof holds at this level of generality.

\begin{theorem}
\label{thm:delaunay_convergence}
Let $X \subset \mathbb{R}^n$ be a Delaunay-generic compact algebraic curve, and let $\{A_\epsilon\}_{\epsilon \searrow 0}$ be a sequence of $\epsilon$-approximations of $X$. Every maximal Delaunay cell is the Hausdorff limit of a sequence of Delaunay cells of $A_{\epsilon}$. 
\end{theorem}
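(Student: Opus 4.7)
The plan is to pass to the lifted polyhedra of Proposition~\ref{prop:delaunay_faces}: (maximal) Delaunay cells of $X$ and of each $A_\epsilon$ correspond under the projection $\pi$ to (maximal, meaning not properly contained in a larger one) lower exposed faces of $P^*_X$ and $P^*_{A_\epsilon}$. Since $X$ is compact and $A_\epsilon$ is $\epsilon$-dense in $X$, we have $A_\epsilon \to X$ in Hausdorff distance, hence $(A_\epsilon)_U \to X_U$ on the paraboloid $U\subset\R^{n+1}$, so that $P^*_{A_\epsilon}\cap K \to P^*_X\cap K$ in Hausdorff distance for every compact $K\subset\R^{n+1}$. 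It then suffices to show that every maximal lower exposed face of $P^*_X$ is a Hausdorff limit of lower exposed faces of $P^*_{A_\epsilon}$, and then apply $\pi$.

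Fix a maximal Delaunay cell $D=\conv(x_1,\ldots,x_k)$ of $X$, coming from an inscribed sphere $B$ with touching set $\overline{B}\cap X=\{x_1,\ldots,x_k\}$. Delaunay-genericity forces $k\le n+1$, so the corresponding maximal lower exposed face $F$ of $P^*_X$ is a $(k{-}1)$-simplex with lifted vertices $v_i=(x_i,\|x_i\|^2)$ and exposing hyperplane $H_F$. Maximality of $F$ yields a uniform vertical gap: outside a small neighborhood of $\{v_1,\ldots,v_k\}$, the lifted curve $X_U$ lies strictly above $H_F$, and at each $v_i$ the contact of $X_U$ with $H_F$ is generically quadratic. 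Choose $x_i^\epsilon\in A_\epsilon$ with $d(x_i,x_i^\epsilon)\le\epsilon$ and set $v_i^\epsilon=(x_i^\epsilon,\|x_i^\epsilon\|^2)$, noting $|v_i^\epsilon-v_i|=O(\epsilon)$.

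I would then construct, for each sufficiently small $\epsilon$, a hyperplane $H_\epsilon$ close to $H_F$ that supports $P^*_{A_\epsilon}$ from below with exposed face $F_\epsilon$ having vertex set exactly $\{v_1^\epsilon,\ldots,v_k^\epsilon\}$. When $k=n+1$, this is the unique affine hyperplane through the $v_i^\epsilon$; when $k<n+1$, one takes a small translate of $H_F$ through their affine span while keeping its normal direction. The uniform vertical gap keeps distant lifted points of $A_\epsilon$ strictly above $H_\epsilon$ for $\epsilon$ small, while a local quadratic-separation analysis near each $v_i$, combined with Delaunay-genericity (which precludes an accidental coplanarity of extra lifts with $H_\epsilon$), rules out the nearby points. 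Projecting by $\pi$ then yields a Delaunay cell $D_\epsilon=\conv(x_1^\epsilon,\ldots,x_k^\epsilon)$ of $A_\epsilon$ that Hausdorff-converges to $D$ as $\epsilon\to 0$.

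The hard part is the local analysis at each tangent vertex $v_i$: the perturbations $|v_i^\epsilon-v_i|$ and $|H_\epsilon-H_F|$ are only of order $\epsilon$, while the quadratic tangential separation of $X_U$ from $H_F$ near $v_i$ is $O(\epsilon^2)$. Preventing a spurious lift of $A_\epsilon$ near $v_i$ from dropping below $H_\epsilon$ therefore requires a careful choice of $x_i^\epsilon$ --- for instance, as the local minimizer of the vertical distance to $H_F$ among lifts of $A_\epsilon$ near $x_i$ --- together with a correspondingly careful choice of the normal of $H_\epsilon$. The degenerate case $k=1$ (an osculating circle at a curvature maximum) has higher-order tangential contact and will need its own scale analysis, as will the freedom left in choosing the normal of $H_\epsilon$ when $k<n+1$.
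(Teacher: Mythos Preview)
Your lifting strategy via Proposition~\ref{prop:delaunay_faces} is exactly how the paper begins. The divergence is in the next step: where you attempt a hands-on construction of the supporting hyperplane $H_\epsilon$ through chosen lifts $v_i^\epsilon$, the paper instead invokes a ready-made convergence result for faces of convex hulls of curves, namely \cite[Theorem~3.5]{nidhi}. That theorem says that if every point of a curve $C$ lying on $\partial\,\conv(C)$ is extremal, then every simplicial face of $\conv(C)$ with a \emph{unique} exposing hyperplane is the Hausdorff limit of facets of $\conv(B_\epsilon)$ for $\epsilon$-approximations $B_\epsilon$ of $C$. The paper just checks these hypotheses for $C=X_U$: extremality is inherited from the paraboloid $U$, simpliciality is exactly Delaunay-genericity, and unique exposure follows from maximality of the Delaunay cell (the inscribed sphere $B$ is then uniquely determined). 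The case $k=1$ is dispatched separately by an extremal-point lemma \cite[Lemma~3.1]{nidhi}, avoiding any curvature-scale analysis.

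The $\epsilon$-versus-$\epsilon^2$ scale mismatch you flag is real, and it is precisely the content that the citation absorbs. Your proposed fix of choosing $x_i^\epsilon$ as the local vertical minimizer is sensible locally, but as you already note it does not by itself control what the resulting tilt of $H_\epsilon$ does near the \emph{other} contact points $v_j$; with $k$ tangential contacts, arranging all of them simultaneously by an explicit perturbation is genuinely delicate. If you want a self-contained argument rather than a citation, the cleaner route is the reverse one: for each $\epsilon$, pick any lower facet $F_\epsilon$ of $P^*_{A_\epsilon}$ whose exposing hyperplane is closest (in the natural sense) to $H_F$; use Hausdorff convergence $P^*_{A_\epsilon}\to P^*_X$ to show that any subsequential limit of $(F_\epsilon,H_{F_\epsilon})$ is a lower exposed face of $P^*_X$ with exposing hyperplane $H_F$; and then conclude from \emph{unique} exposure that this limit must equal $F$. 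This compactness-plus-uniqueness argument sidesteps the explicit scale analysis entirely and is essentially what \cite[Theorem~3.5]{nidhi} packages.
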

\begin{proof}
Consider a sequence $\{A_\epsilon\}_{\epsilon \searrow 0}$ of $\epsilon$-approximations of $X$, where $\epsilon \searrow 0$ indicates a decreasing sequence of positive real numbers $\epsilon_\nu$ for $\nu \in \mathbb{N}$. We will study the convex sets
$
P^*_{A_{\epsilon}} = conv(a_U\ |\ a \in A_{\epsilon}),
$
where $a \mapsto a_U = (a,||a||^2)$ lifts $a$ to the paraboloid $U$. 
The lower faces of $P^*_{A_{\epsilon}}$ project to Delaunay cells of $A_\epsilon$ \cite[Theorem 6.12, Theorem 7.7]{joswig}.

We now apply \cite[Theorem 3.5]{nidhi} to our situation.
This result says the following.
Let $C$ be a curve and $B_\epsilon$ be a sequence of $\epsilon$-approximations of $C$.
Suppose every point on $C$ which is contained in the boundary of $conv(C)$ is an extremal point of $conv(C)$, meaning it is not contained in the open line segment joining any two points of $conv(C)$. Let $F$ be a simplicial face of $conv(C)$ which is an exposed face of $conv(C)$ with a unique exposing hyperplane.
Then $F$ is the Hausdorff limit of a sequence of facets of $conv(B_\epsilon)$. We apply this result in the case when $C= X_U = \{x_U \in \mathbb{R}^{n+1} \ |\ x \in X\}$ and $B_\epsilon = (A_{\epsilon})_U = \{a_U \in \mathbb{R}^{n+1}\ |\ a \in A_{\epsilon}\}$. 

Since every point on $U$ is extremal in $conv(U)$ and $conv(X_U) \subset conv(U)$, every point on $X_U$ which is contained in the boundary of $conv(X_U)$ is also extremal in $conv(X_U)$. 
A maximal Delaunay cell of $X$ is a simplex because $X$ is Delaunay-generic.
Consider a maximal Delaunay cell of $X$ which is not a vertex. It has a unique description as $Del_X(B)$ for a disc $B$.
Proposition \ref{prop:delaunay_faces} establishes a one-to-one correspondence between such Delaunay cells and lower exposed faces of $P^*_X$, which are uniquely exposed by the hyperplane containing $(\partial \overline{B})_U$.
 In this case, \cite[Theorem 3.5]{nidhi} holds, so the result is proved.

If a maximal Delaunay cell is a vertex, then it is a point $x \in X$. It is then also an extremal point of $conv(X_U)$. Since $conv(B_\epsilon)$ is sequence of compact convex sets converging in the Hausdorff sense to $P_X^*$, by \cite[Lemma 3.1]{nidhi} there exists a sequence of points of $B_\epsilon$ which are extremal points of $conv(B_\epsilon)$ converging to $x_U$. So, their projections are Delaunay cells of $A_\epsilon$ converging to $x$, since every point in a finite point set is a Delaunay cell of that point set. 
\end{proof}

We will now study limits of Voronoi cells, using results from \cite{skeletons}, which studies convergence of Voronoi cells of \emph{r-nice sets} (for a definition, see \cite[p. 119]{skeletons}). In the plane, these are open sets whose boundary satisfies some properties. In particular, open sets whose boundaries are an algebraic curve with positive reach $r$ satisfy the $r$-nice condition. All closed $C^2$ submanifolds of $\mathbb{R}^n$ have positive reach, and thus in particular, a compact smooth algebraic curve in $\mathbb{R}^2$ has positive reach; we refer the reader to \cite{scholtes} for further discussion of which sets have positive reach. 

To study continuity and convergence of closed sets in the plane, Brandt uses the \emph{hit-miss topology} on the set $\mathcal{F}$ of closed subsets of the plane \cite[Section 1-2]{matheron}. 
\begin{definition}
\label{def:semicontinuous}
In the hit-miss topology, a sequence $\{F_n\}$ \emph{converges} to $F$ if and only if
\begin{enumerate}
    \item \label{def:upper} for any $p \in F$, there is a sequence $p_n \in F_n$ such that $p_n \rightarrow p$; and
    \item \label{def:lower}
    if there exists a subsequence $p_{n_k} \in F_{n_k}$  converging to a point $p$, then $p \in F$.
\end{enumerate}
Then, to determine if a function with range in $\mathcal{F}$ is continuous, we need to examine the above conditions for sequences of sets obtained by applying the function to countable convergent sequences in $\mathbb{R}^2.$ If all such sequences satisfy (\ref{def:upper}) then the function is \emph{upper-semicontinuous}. If all such sequences satisfy (\ref{def:lower}) then it is \emph{lower-semicontinuous}. If a function satisfies both then it is continuous.
\end{definition}

\begin{lemma}
\label{lem:voronoi}
Let $X \subset \mathbb{R}^2$ be a smooth algebraic plane curve. Then the function $Vor_X: X \rightarrow \mathcal{F}$ sending $x \mapsto Vor_X(x)$ is continuous in the hit-miss topology.
\end{lemma}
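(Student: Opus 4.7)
The plan is to verify the two conditions of Definition \ref{def:semicontinuous} for any convergent sequence $x_n \to x$ in $X$, with $F := Vor_X(x)$ and $F_n := Vor_X(x_n)$. Condition (2) is essentially formal: if a subsequence $p_{n_k} \in Vor_X(x_{n_k})$ converges to $p$, then for every $y \in X$ the inequality $d(p_{n_k}, x_{n_k}) \leq d(p_{n_k}, y)$ passes to the limit by joint continuity of Euclidean distance, yielding $d(p,x) \leq d(p,y)$ and hence $p \in Vor_X(x)$.

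Condition (1) is the substantive direction: given $p \in Vor_X(x)$, we must produce $p_n \in Vor_X(x_n)$ with $p_n \to p$. Since $X$ is smooth, $Vor_X(x)$ lies on the normal line $N_X(x)$ and, because $X$ has positive reach, contains a neighborhood of $x$ on $N_X(x)$. Choose a continuous local unit normal field $\nu$ on $X$ near $x$ and write $p = x + s\nu(x)$. The natural candidate is $p_n := x_n + s\nu(x_n)$, which converges to $p$. Assuming first that $p$ lies in the relative interior of the interval $Vor_X(x) \subset N_X(x)$, the plan is to verify $d(p_n, y) \geq |s| = d(p_n, x_n)$ for every $y \in X$ by splitting $y$ into two regimes. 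For $y$ outside a small neighborhood $U$ of $x$ in $X$, the strict interior condition and compactness of $X$ give a uniform positive margin $d(p, y) - |s| \geq \delta$, which persists under the perturbations $x_n \to x$ and $p_n \to p$. For $y \in U$, parameterize $X$ near $x_n$ by arclength $\gamma_n(t)$ and Taylor expand: since $p_n - x_n$ is normal to $X$ at $x_n$, the first-order term in $\|p_n - \gamma_n(t)\|^2 - s^2$ vanishes and the leading term is $(1 - s\kappa(x_n))t^2 + O(t^3)$, where $\kappa$ denotes the signed curvature relative to $\nu$. The interior condition forces $s\kappa(x) < 1$, and continuity of $\kappa$ propagates this strict inequality to $x_n$ for large $n$, so $d(p_n, y) \geq |s|$ locally in $U$, and the two regimes combine uniformly in $n$.

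The main obstacle is the boundary case: if $p$ is an endpoint of the interval $Vor_X(x)$, then either $s\kappa(x) = 1$ (so $p$ lies at a center of curvature) or $p$ has more than one nearest point on $X$, and the quadratic analysis above degenerates. The plan is to handle this by approximation from the interior: choose $p^{(k)} \to p$ with each $p^{(k)}$ in the relative interior of $Vor_X(x)$, which is possible because that relative interior contains a neighborhood of $x$ on $N_X(x)$; apply the interior construction to each $p^{(k)}$ to obtain sequences $p_n^{(k)} \in Vor_X(x_n)$ with $p_n^{(k)} \to p^{(k)}$ as $n \to \infty$; and extract a diagonal sequence $p_n \in Vor_X(x_n)$ converging to $p$. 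Alternatively, since smooth compact $X$ has positive reach and is therefore $r$-nice, one may invoke the convergence results for Voronoi cells of $r$-nice sets from \cite{skeletons} directly.
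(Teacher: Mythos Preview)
Your argument is correct and takes a more hands-on route than the paper's. The paper's proof is essentially a citation: it invokes \cite[Theorem~2.2]{skeletons} for lower semicontinuity and \cite[Theorem~3.2]{skeletons} for full continuity under the hypotheses that the curve is $C^2$ with closed skeleton, then checks that a smooth algebraic curve satisfies these hypotheses via the $r$-nice condition---exactly the alternative you mention in your last sentence. Your direct argument instead reproves that content in this setting: condition~(\ref{def:lower}) by passing the defining inequalities to the limit, and condition~(\ref{def:upper}) via the explicit normal-coordinate construction $p_n = x_n + s\nu(x_n)$, a curvature-controlled Taylor expansion, and a diagonal extraction at endpoints. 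This buys a self-contained proof and makes transparent where smoothness enters (continuity of $\nu$ and $\kappa$, and the strict inequality $s\kappa(x)<1$ characterizing relative-interior points of the Voronoi segment). One small caveat: you invoke compactness of $X$ for the uniform margin in the far regime, but the lemma as stated does not assume it; since $X$ is closed in $\mathbb{R}^2$ and $d(p,\cdot)$ is proper, the infimum over $X\setminus U$ is still attained, so the repair is immediate.
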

\begin{proof}
By \cite[Theorem 2.2]{skeletons}, the Voronoi function $Vor_X:X \rightarrow \mathcal{F}$ is lower semicontinuous. By \cite[Theorem 3.2]{skeletons}, if the curve is $C^2$ and the \emph{skeleton} (locus of centers of maximally inscribed discs) is closed, then the Voronoi function is continuous. A smooth algebraic curve is $C^2$. The skeleton is closed because a smooth curve satisfies the $r$-nice condition, and $r$-nice curves have closed skeletons \cite{skeletons}.
\end{proof}

By \cite[p.10]{matheron}, convergence in the hit-miss topology is equivalent to Wijsman convergence. In what follows, we rephrase the results from \cite{skeletons} in the setting of Wijsman convergence of Voronoi cells of plane curves, and extend them to singular curves.

\begin{theorem}
\label{thm:voronoi_convergence}
Let $X$ be a compact smooth algebraic curve in $\mathbb{R}^2$ and $\{A_\epsilon\}_{\epsilon \searrow 0}$ be a sequence of $\epsilon$-approximations of $X$. Every Voronoi cell is the Wijsman limit of a sequence of Voronoi cells of $A_\epsilon$.
\end{theorem}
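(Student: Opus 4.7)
The plan is, for an arbitrary $x \in X$, to use the $\epsilon$-approximation property to pick $x_\epsilon \in A_\epsilon$ with $d(x_\epsilon, x) \leq \epsilon$, and then show that the sequence $Vor_{A_\epsilon}(x_\epsilon)$ converges to $Vor_X(x)$ in the hit-miss topology on $\mathcal{F}$; by \cite[page 10]{matheron} this is equivalent to Wijsman convergence. Both conditions of Definition \ref{def:semicontinuous} must be verified.

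For the first condition, given $y \in Vor_X(x)$, I would invoke Lemma \ref{lem:voronoi}: since $x_\epsilon \to x$ along the smooth curve $X$, the continuous map $Vor_X \colon X \to \mathcal{F}$ yields $Vor_X(x_\epsilon) \to Vor_X(x)$ in the hit-miss topology, and so there exists a sequence $y_\epsilon \in Vor_X(x_\epsilon)$ with $y_\epsilon \to y$. Because $A_\epsilon \subseteq X$, fewer Voronoi constraints are imposed by $A_\epsilon$, giving the inclusion $Vor_X(x_\epsilon) \subseteq Vor_{A_\epsilon}(x_\epsilon)$. Hence $y_\epsilon \in Vor_{A_\epsilon}(x_\epsilon)$, as required.

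For the second condition, suppose a subsequence $y_{\epsilon_k} \in Vor_{A_{\epsilon_k}}(x_{\epsilon_k})$ satisfies $y_{\epsilon_k} \to y$. For each $x' \in X$, I pick $a_k \in A_{\epsilon_k}$ with $d(a_k, x') \leq \epsilon_k$, so that $a_k \to x'$. The defining inequality $d(y_{\epsilon_k}, x_{\epsilon_k}) \leq d(y_{\epsilon_k}, a_k)$ passes to the limit by continuity of Euclidean distance; combined with $x_{\epsilon_k} \to x$, this yields $d(y, x) \leq d(y, x')$. Holding for every $x' \in X$, we conclude $y \in Vor_X(x)$.

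The main obstacle is the first condition: one cannot simply set $y_\epsilon = y$, because $y$ satisfies the Voronoi inequality with respect to $x$ rather than $x_\epsilon$, and the Voronoi cells with respect to the finite samples $A_\epsilon$ may be strictly larger than those with respect to the whole curve $X$. The continuity of $Vor_X$ on the base curve (Lemma \ref{lem:voronoi}), combined with the elementary inclusion $Vor_X(x_\epsilon) \subseteq Vor_{A_\epsilon}(x_\epsilon)$, is precisely what bridges the simultaneous perturbation of the basepoint and replacement of $X$ by its discrete sample.
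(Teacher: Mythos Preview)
Your proof is correct and follows the same overall strategy as the paper: pick $x_\epsilon \in A_\epsilon$ with $x_\epsilon \to x$ and invoke the continuity of $Vor_X$ from Lemma~\ref{lem:voronoi} to obtain hit-miss (equivalently Wijsman) convergence. The only difference is one of packaging: the paper cites \cite[Theorem~3.1]{skeletons} as a black box for the implication ``$Vor_X$ continuous and $x_\epsilon \to x$ $\Rightarrow$ $Vor_{A_\epsilon}(x_\epsilon) \to Vor_X(x)$,'' whereas you reprove that implication by verifying both conditions of Definition~\ref{def:semicontinuous} directly. Your two ingredients---the inclusion $Vor_X(x_\epsilon) \subseteq Vor_{A_\epsilon}(x_\epsilon)$ for condition~(\ref{def:upper}) and the pointwise limit of the Voronoi inequalities for condition~(\ref{def:lower})---are exactly what underlies the cited result, so your argument is a self-contained unpacking of the paper's citation rather than a genuinely different route.
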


\begin{proof}
By Lemma \ref{lem:voronoi}, the function $Vor_X: X \rightarrow \mathcal{F}$ is continuous. Theorem 3.1 from \cite{skeletons} states that in this case, if $x_\epsilon$ is a sequence such that $x_\epsilon \in A_\epsilon$ and $x_\epsilon \rightarrow x$, then $Vor_{A_\epsilon}(x_\epsilon) \rightarrow Vor_X(x)$. Such a sequence must exist because for all $y \in X$, there exists a $y_\epsilon \in A_\epsilon$ such that $d(y,y_\epsilon) \leq \epsilon$.
\end{proof}

We now investigate the structure of Voronoi cells of different types of singular points. We show examples in which the Voronoi cell at a singular point is $0$-, $1$-, and $2$- dimensional in Proposition \ref{prop:singular_points} and Figure~\ref{fig:singular_varieties}. First, we need a glueing lemma. 

\begin{lemma}
\label{lem:glueing}
Let $C$ and $D$ be subsets of $\mathbb{R}^2$ containing a point $p \in C \cap D$. Then 
$$
Vor_{C \cup D}(p) = Vor_C(p) \cap Vor_D(p).
$$
\end{lemma}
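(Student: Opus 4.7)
The plan is to prove this by a direct double inclusion argument that simply unfolds the definition of the Voronoi cell and exploits the fact that a universal quantifier ranging over a union $C \cup D$ is logically equivalent to the conjunction of the two universal quantifiers ranging over $C$ and over $D$ separately. Since $p \in C \cap D$ by hypothesis, $p$ is a legitimate reference point for all three Voronoi cells involved, so there are no degeneracy issues to worry about.

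Concretely, I would fix $y \in \mathbb{R}^2$ and observe that $y \in Vor_{C \cup D}(p)$ means precisely that $d(y,p) \leq d(y,x')$ for all $x' \in C \cup D$. The forward inclusion follows because any $x' \in C$ (resp.\ $x' \in D$) also lies in $C \cup D$, so the defining inequality for $Vor_{C\cup D}(p)$ restricts to give both $y \in Vor_C(p)$ and $y \in Vor_D(p)$, hence $y \in Vor_C(p) \cap Vor_D(p)$.

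For the reverse inclusion, I would take $y \in Vor_C(p) \cap Vor_D(p)$ and, given an arbitrary $x' \in C \cup D$, split into the cases $x' \in C$ and $x' \in D$: in the first case membership in $Vor_C(p)$ yields $d(y,p) \leq d(y,x')$, and in the second case membership in $Vor_D(p)$ gives the same inequality. Since $x'$ was arbitrary, $y \in Vor_{C\cup D}(p)$.

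There is no real obstacle here: the lemma is essentially a set-theoretic tautology about infima (equivalently, about minimum-distance conditions), and the role of the hypothesis $p \in C \cap D$ is only to make sure both $Vor_C(p)$ and $Vor_D(p)$ are defined. The lemma will later be applied to decompose Voronoi cells of singular points into intersections of Voronoi cells of smooth local branches.
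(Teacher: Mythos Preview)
Your proposal is correct and follows essentially the same approach as the paper: both arguments simply unfold the definition of the Voronoi cell and use that a point is at minimal distance from $p$ among all $x'\in C\cup D$ if and only if it is at minimal distance from $p$ among all $x'\in C$ and among all $x'\in D$. The paper's proof is a two-sentence version of exactly what you wrote.
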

\begin{proof}
A point $x \in Vor_{C \cup D}(p)$ is closer to $p$ than it is to any other point of $C$ or $D$. On the other hand, a point in $Vor_C(p) \cap Vor_D(p)$ is closer to $p$ than it is to any other point of $C$ or $D$.
\end{proof}

\begin{figure}[h]
\centering
\begin{subfigure}{.45\textwidth}
  \centering
  \includegraphics[width=.9\linewidth]{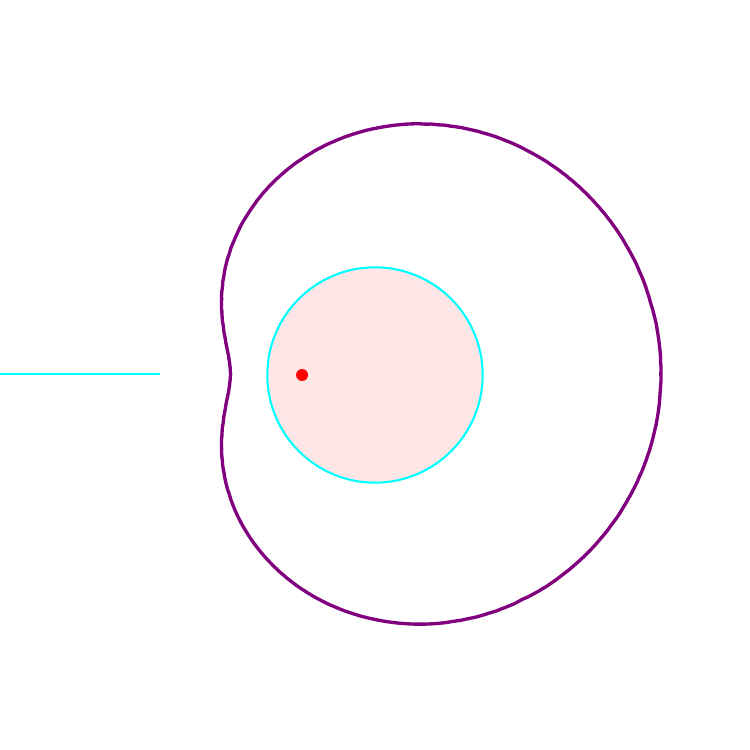}
  \caption{$(x^2 + y^2 - x)^2 - (1.5)^2 (x^2 + y^2) = 0$\\ An isolated singularity and its\\
  2-dimensional Voronoi cell. }
\end{subfigure}%
\begin{subfigure}{.45\textwidth}
  \centering
  \includegraphics[width=.9\linewidth]{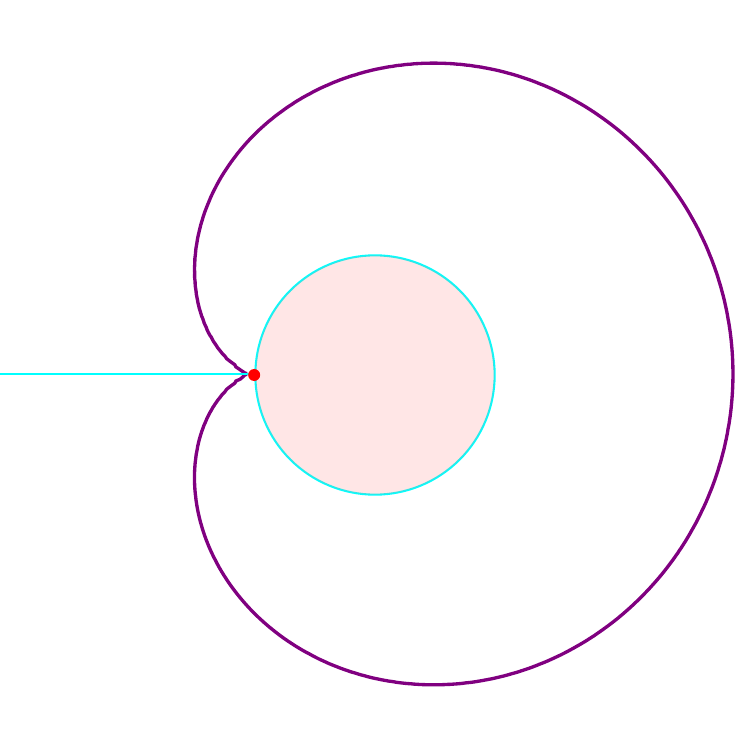}
  \caption{$(x^2 + y^2 - x)^2 -  (x^2 + y^2)= 0$\\
  A cusp and its 2-dimensional Voronoi cell. }
\end{subfigure}
\begin{subfigure}{.45\textwidth}
  \centering
  \includegraphics[width=.9\linewidth]{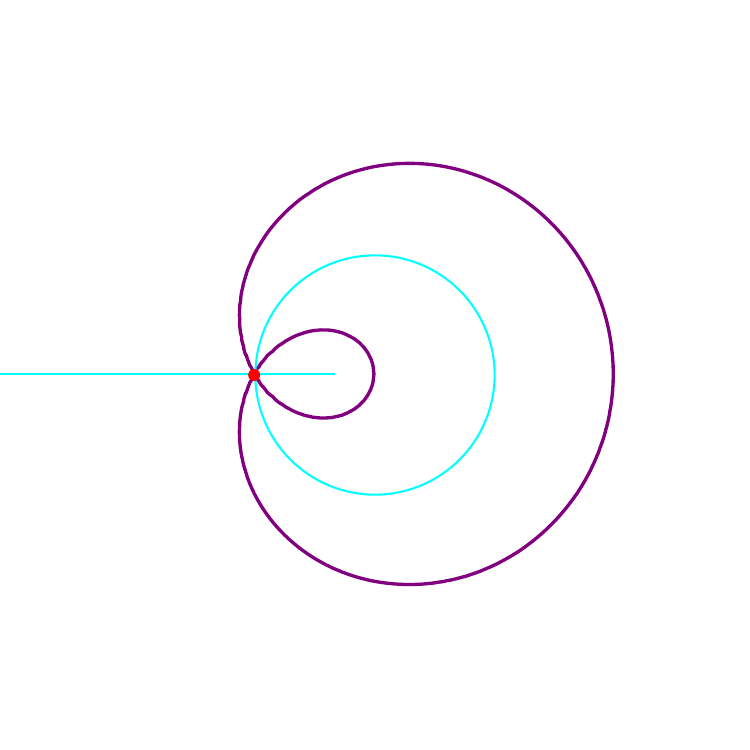}
  \caption{$(x^2 + y^2 - x)^2 - (0.5)^2 (x^2 + y^2)= 0$\\ A node and its 0-dimensional Voronoi cell. }
\end{subfigure}%
\begin{subfigure}{.45\textwidth}
  \centering
  \includegraphics[width=.8\linewidth]{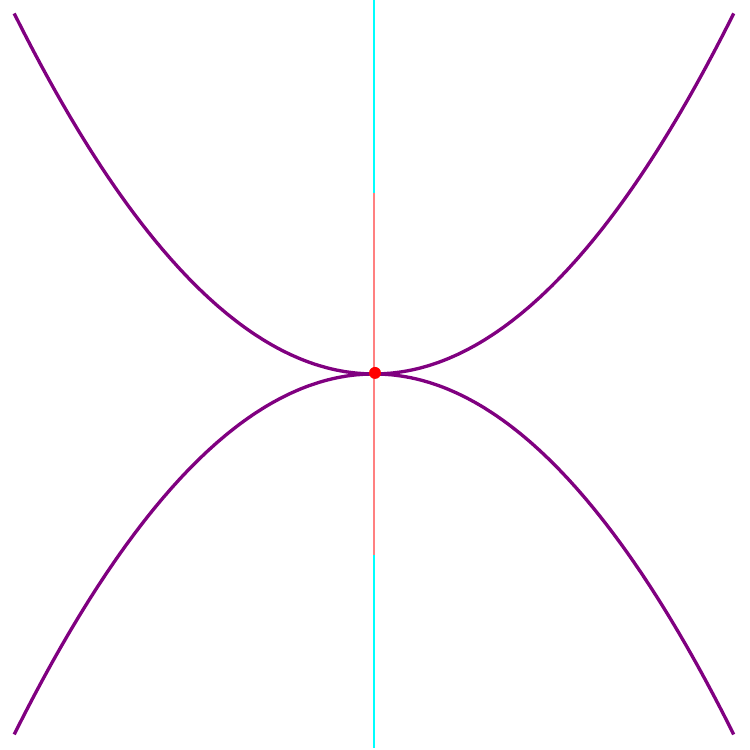}
  \caption{$x^4 - y^2= 0$\\
  A tacnode and its 1-dimensional Voronoi cell. }
\end{subfigure}
\caption{Four singular varieties with singular point $(0,0)$. In each case the medial axis is blue, the singular point is red, and its Voronoi cell is pink.}
\label{fig:singular_varieties}
\end{figure}

\begin{proposition}
\label{prop:singular_points}
Let $X \subset \mathbb{R}^2$ be a real algebraic plane curve and $p$ be a singular point.
\begin{enumerate}
    \item If $p$ is a node, then its Voronoi cell is 0-dimensional and equal to $p$;
    \item If $p$ is a tacnode, then its Voronoi cell is 1-dimensional.
    \item If $p$ is an isolated point, its Voronoi cell is 2-dimensional;
\end{enumerate}
\end{proposition}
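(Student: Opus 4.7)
The plan is to treat the three cases separately, each by passing to local branches of $X$ at $p$ and combining Lemma \ref{lem:glueing} with the fact established near the definition of a Voronoi cell that $Vor_C(x)$ is contained in the normal space $N_C(x)$ whenever $x$ is a smooth point of $C$. The mechanism is uniform: since $Vor_X(p) \subseteq Vor_Y(p)$ whenever $Y \subseteq X$ contains $p$, I can take $Y$ to be the local smooth branches of $X$ at $p$ (intersected with a small neighborhood) and get upper bounds on $\dim Vor_X(p)$ by intersecting normal lines.

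Case (1) is the shortest. If $p$ is isolated in $X$, pick $\delta > 0$ with $B(p,\delta) \cap X = \{p\}$; a quick triangle-inequality check shows $B(p,\delta/2) \subseteq Vor_X(p)$, so the cell is $2$-dimensional. For case (2), a node means $X$ is locally a union of two smooth arcs $C_1, C_2$ through $p$ with distinct tangent directions. Applying Lemma \ref{lem:glueing} to these local arcs and using $Vor_{C_i}(p) \subseteq N_{C_i}(p)$, I would conclude
$$
Vor_X(p) \subseteq Vor_{C_1}(p) \cap Vor_{C_2}(p) \subseteq N_{C_1}(p) \cap N_{C_2}(p).
$$
Distinct tangents at $p$ force distinct normal lines, and two distinct lines through a common point meet only at that point, so $Vor_X(p) = \{p\}$.

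Case (3) is the tacnode: two smooth local branches at $p$ share a single tangent line, and therefore share a single normal line $N$. The same gluing-and-normal-line argument as in (2) gives $Vor_X(p) \subseteq N$, so $\dim Vor_X(p) \leq 1$. The remaining task is to show $\dim Vor_X(p) \geq 1$. For each smooth branch $C_i$, the normal projection is a local diffeomorphism near $p$, so $Vor_{C_i}(p)$ contains an open segment of $N$ around $p$. The intersection of two such segments is again a segment of $N$. Finally, shrinking this segment if necessary, the distance from $p$ dominates the distance from any point of $X$ lying outside a fixed small neighborhood of $p$, so no far-away part of $X$ interferes, and the shrunken segment lies in $Vor_X(p)$.

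The main obstacle is the $\dim \geq 1$ direction in case (3): one has to produce a genuine open interval on $N$ that survives both the intersection of the two branch Voronoi cells and the competition from non-local points of $X$. This rests on the local invertibility of the normal projection at a smooth point together with a compactness argument away from $p$. Everything else is straightforward bookkeeping with Lemma \ref{lem:glueing} and the normal-space containment.
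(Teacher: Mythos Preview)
Your proposal is correct and follows essentially the same approach as the paper's proof: all three cases proceed by passing to local smooth branches, invoking Lemma~\ref{lem:glueing} (or the monotonicity $Vor_X(p)\subseteq Vor_Y(p)$ for $Y\subseteq X$), and then using the normal-space containment at smooth points. Your treatment of the lower bound in case~(3) --- via local invertibility of the normal projection together with a compactness cutoff for the part of $X$ outside a fixed neighborhood of $p$ --- is in fact more explicit than the paper's, which concludes the one-dimensionality by asserting that ``within $B(p,\epsilon)$ the Voronoi cell is a line segment'' without spelling out why that segment is nondegenerate.
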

\begin{proof}
\hfill

\begin{enumerate}
\item If $p$ is a node, then we claim that the only point contained in $Vor_X(p)$ is $p$. At $p$, the curve meets in two branches which have distinct tangent directions at $p$. If we treat this as two separate 1-dimensional subsets $X_1$ and $X_2$ and apply Lemma \ref{lem:glueing}, we see that $Vor_X(p) = Vor_{X_1}(p) \cap Vor_{X_2}(p)$. But, since $p$ is a smooth point of $X_1$ and $X_2$, the Voronoi cells $Vor_{X_1}(p)$ and $Vor_{X_2}(p)$ are each contained in their respective normal directions, which are distinct. Therefore, $Vor_{X_1}(p) \cap Vor_{X_2}(p) = p$.
    
\item If $p$ is a tacnode, two branches of the curve meet at $p$ where they share a tangent direction, and thus a normal line. We can choose $\epsilon>0$ so that we can separate $X \cap B(p, \epsilon)$ into subsets $X_1$ and $X_2$ corresponding to the two branches. Both $Vor_{X_1}(p)$ and $Vor_{X_2}(p)$ are $1$-dimensional subsets of the same normal line. By Lemma \ref{lem:glueing}, $Vor_X(p)$ is a $1$-dimensional subset of this normal line. 

\item Suppose $p$ is an isolated point. Then there is a ball $B(p,r)$ centered at $p$ such that the ball contains no other points of the curve $X$. Therefore, the ball $B(p,r/2)$ is entirely contained in $Vor_X(p)$, so it is $2$-dimensional.
\end{enumerate}
\end{proof}

\begin{example}
\label{ex:cusp_converge}
In this example we illustrate why Theorem \ref{thm:voronoi_convergence} fails when the curve has a singular point. From this example it will be clear that the singular points must be included in the samples $A_\epsilon$, and it turns out that this condition is enough to extend Theorem \ref{thm:voronoi_convergence} to the singular case.

Consider the curve defined by the equation $y^2 = x^3$. In \cite[Remark 2.4]{voronoi} the authors give equations for the Voronoi cell of the cusp at the origin. This region is
$$
Vor_{y^2=x^3}((0,0)) = \{(x,y) \in \mathbb{R}^2 \ : \ 
27 y^4 + 128 x^3 + 72 x y^2 + 32 x^2 + y^2 + 2 x \leq 0
\}.
$$
In Figure \ref{fig:cuspidal} we give three  $\epsilon$-approximations of the curve and the corresponding Voronoi decompositions. Let $\epsilon = 1/n$. The points in the $\epsilon$-approximation $A_{\epsilon}$ are given by:
$$
A_{\epsilon} = \left \{\left (\frac{j}{n}, \pm \left (\frac{j}{n}\right)^{3/2} \right )\right \}_{j = 1}^{\infty}.
$$
As we can see in Figure \ref{fig:cuspidal}, there is no sequence of cells converging to $Vor_{y^2=x^3}((0,0))$ because the $x$-axis, present due to the symmetrical nature of the sample, always divides the Voronoi cell.

\begin{figure}[h]
    \centering
    \includegraphics[width= \linewidth]{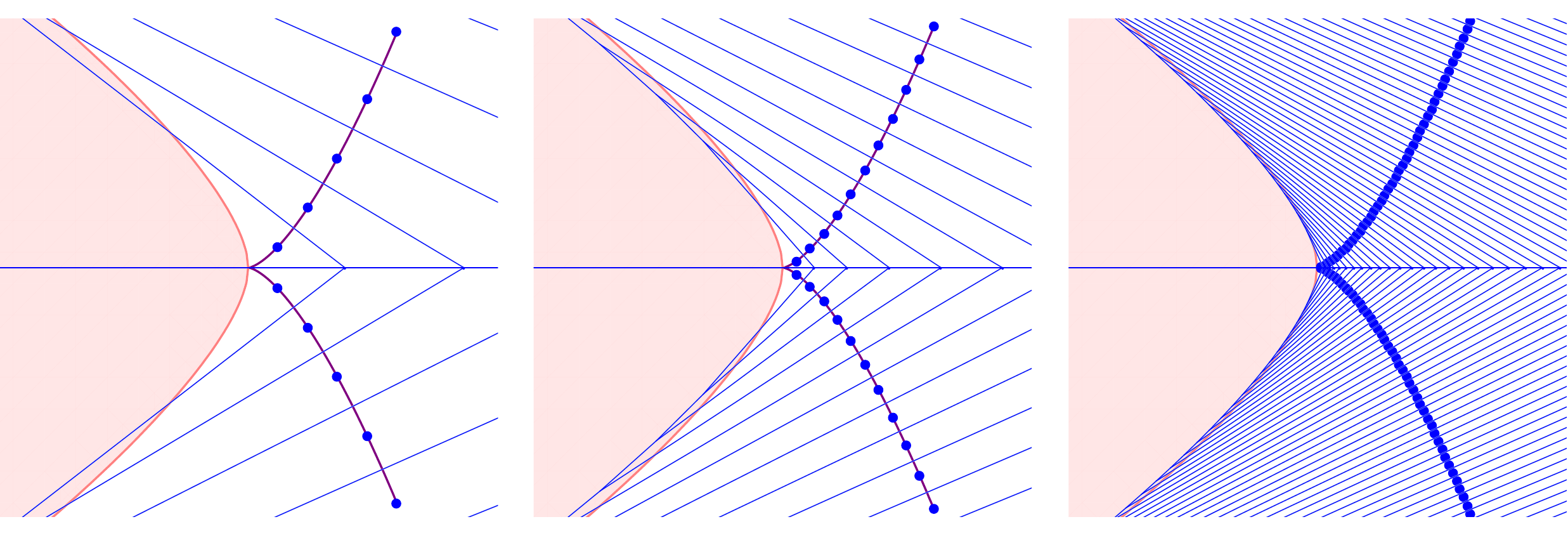}
    \caption{Some $\epsilon$-approximations of the curve $y^2=x^3$ and their Voronoi diagrams. The Voronoi cell of the cusp $(0,0)$ is shown in pink. This figure is discussed in Example \ref{ex:cusp_converge}.}
    \label{fig:cuspidal}
\end{figure}
\end{example}

We now are able to expand Theorem \ref{thm:voronoi_convergence} to include singular varieties.

\begin{proposition}
\label{prop:singular_convergence}
Let $X \subset \mathbb{R}^2$ be a compact algebraic curve and $\{A_\epsilon\}_{\epsilon \searrow 0}$ a sequence of $\epsilon$-approximations with the singular locus $Sing(X) \subset A_\epsilon$ for all $\epsilon$. Then every Voronoi cell of $X$ is the Wijsman limit of a sequence of Voronoi cells of $A_\epsilon$.
\end{proposition}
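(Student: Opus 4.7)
The plan is to extend the proof of Theorem \ref{thm:voronoi_convergence} by treating smooth and singular points of $X$ separately, using the hypothesis $Sing(X) \subseteq A_\epsilon$ to handle the latter. Since Wijsman convergence on closed subsets of $\mathbb{R}^2$ coincides with hit-miss convergence (as noted after Lemma \ref{lem:voronoi}), I would verify, for a suitable sequence $p_\epsilon \in A_\epsilon$, both conditions of Definition \ref{def:semicontinuous}: an upper condition, that any subsequential limit of points in $Vor_{A_\epsilon}(p_\epsilon)$ lies in $Vor_X(p)$, and a lower condition, that every $b \in Vor_X(p)$ is a limit of points $b_\epsilon \in Vor_{A_\epsilon}(p_\epsilon)$. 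Choose $p_\epsilon \to p$ with $p_\epsilon \in A_\epsilon$; when $p \in Sing(X)$, take the constant sequence $p_\epsilon = p$, which lies in $A_\epsilon$ by hypothesis.

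The upper condition is routine in either case. If $b_\epsilon \in Vor_{A_\epsilon}(p_\epsilon)$ and $b_\epsilon \to b$, then for any $y \in X$ I would choose $y_\epsilon \in A_\epsilon$ with $y_\epsilon \to y$ (possible by the $\epsilon$-approximation property) and pass to the limit in the inequality $d(b_\epsilon, p_\epsilon) \le d(b_\epsilon, y_\epsilon)$ to conclude $d(b,p)\le d(b,y)$, i.e.\ $b \in Vor_X(p)$.

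For the lower condition at a smooth point $p \in X \setminus Sing(X)$, the argument of Theorem \ref{thm:voronoi_convergence} adapts: the hypotheses of \cite[Theorem 3.1]{skeletons} invoked through Lemma \ref{lem:voronoi} are local at $p$, namely $C^2$ regularity of $X$ near $p$ and closedness of the global skeleton. The first holds because $p$ is a smooth point of a real algebraic curve, and the second holds because the medial axis of a compact real algebraic curve has closed Euclidean closure even with finitely many singularities of $X$. For the lower condition at $p \in Sing(X)$, I would argue directly: given $b$ in the relative interior of $Vor_X(p)$, one has $d(b,p) < d(b,y)$ strictly for all $y \in X \setminus \{p\}$, and compactness of $X$ together with the $\epsilon$-approximation property force $d(b,p) < d(b,a)$ for all $a \in A_\epsilon \setminus \{p\}$ once $\epsilon$ is sufficiently small, so $b \in Vor_{A_\epsilon}(p)$. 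For $b$ on the relative boundary of $Vor_X(p)$, I would approximate $b$ by a sequence from the relative interior, using that $Vor_X(p)$ is closed and convex with dimension recorded in Proposition \ref{prop:singular_points}, and extract a diagonal subsequence $b_\epsilon \in Vor_{A_\epsilon}(p)$ converging to $b$.

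The hard part is the lower condition at singular points, where the hypothesis $Sing(X) \subseteq A_\epsilon$ is essential: as Example \ref{ex:cusp_converge} shows, without it a symmetric sampling can permanently split $Vor_X(p)$, and no sequence of $Vor_{A_\epsilon}(p_\epsilon)$ can converge to it. Once $p \in A_\epsilon$ is in force, the argument above relies only on compactness of $X$ and the local structure encoded by Proposition \ref{prop:singular_points}, with no further input needed from \cite{skeletons} at singular points.
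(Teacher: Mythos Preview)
Your approach is essentially the paper's: check both hit-miss conditions for a sequence $p_\epsilon\to p$, handling smooth points via the \cite{skeletons} machinery behind Theorem~\ref{thm:voronoi_convergence} and singular points via the hypothesis $p\in A_\epsilon$. The paper streamlines your singular-point step by noting that $A_\epsilon\subseteq X$ and $p\in A_\epsilon$ give $Vor_X(p)\subseteq Vor_{A_\epsilon}(p)$ outright for every $\epsilon$, so one may take the constant sequence $b_\epsilon=b$ and skip the relative-interior/diagonal-subsequence detour entirely.
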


\begin{proof}
By \cite[Theorem 2.2]{skeletons}, the Voronoi function is always lower-semicontinuous. So, we must show that condition (\ref{def:upper}) in Definition \ref{def:semicontinuous} holds. That is, we need that for all $p \in X$, there is a sequence $p_\epsilon \in A_\epsilon$ with $p_\epsilon \rightarrow p$ such that for any $x \in Vor_X(p)$ there is an $x_\epsilon \in Vor_{A_\epsilon}(p_\epsilon)$ with $x_\epsilon \rightarrow x$. We distinguish the cases when $p$ is smooth and singular.

If $p$ is a smooth point on $X$, and $x \in Vor_X(p)$, there exists an $\epsilon$ such that $x$ and $p$ are both in the Voronoi cell $Vor_{A_\epsilon}(p_\epsilon)$ for some $p_\epsilon$.

Suppose now that $p \in X$ is a singular point. We wish to show that there is a sequence of Voronoi cells converging to $Vor_X(p)$, and we take the sequence $Vor_{A_\epsilon}(p)$. To establish convergence, it is now enough to show that for all $x \in Vor_X(p)$, there is an $x_\epsilon \in Vor_{A_\epsilon}(p)$ with $x_\epsilon \rightarrow x$. Since $x \in Vor_X(p)$, we have that $x$ is closer to $p$ than it is to any other point in $X$. So, in particular, $x \in Vor_{A_\epsilon}(p)$. 

Now we have shown that for each $p \in X$, condition (\ref{def:upper}) in Definition \ref{def:semicontinuous} holds. Therefore, for each $p \in X$, we have sequences of Voronoi cells which are convergent to $Vor_p(X)$ in the hit-miss topology. Since convergence in the hit-miss topology and Wijsman convergence are equivalent, every Voronoi cell of $X$ is the Wijsman limit of a sequence of Voronoi cells of the $A_\epsilon$.
\end{proof}

This concludes the proof of Theorem \ref{thm:convergence}.

\section{Medial Axis}\label{sec:medial_axis} 

Let $X = V(F)\subset \mathbb{R}^2$ be a smooth algebraic plane curve.
We now study the medial axis of $X$, as defined in Definition \ref{def:medial}. 
The Zariski closure of the medial axis is an algebraic variety which has the same dimension as the medial axis. We can obtain equations in variables $x,y$ for the ideal $I$ of a variety containing the Zariski closure of the medial axis in the following way.

Let $(s,t)$ and $(z,w)$ be two points on $X$. Then, $s,t,z,$ and $ w$ satisfy the equations 
\[ F(s,t)=0\ \text{and } \  F(z,w)=0 . \] 
If $(x,y)$ is equidistant from $(s,t)$ and $(z,w)$ then
\[ (x-s)^2+(y-t)^2=(x-z)^2+(y-w)^2 . \] 
Furthermore, $(x,y)$ must be a critical point of the distance function from both $(s,t)$ and $(z,w)$. Thus we require that the determinants of the following $2 \times 2$ augmented Jacobian matrices vanish: 
\begin{equation*} 
\begin{bmatrix}
    x-s & y-t\\
    F_s & F_t  
\end{bmatrix} ,
\ \ \ \ \  
\begin{bmatrix}
    x-z & y-w\\
    F_z & F_w  
\end{bmatrix} ,
\end{equation*}
where $F_s,F_t,F_z$ and $F_w$ denote the partial derivatives of $F(s,t)$ and $F(z,w)$, respectively. 
Let
\begin{align*}
    I =  \langle & 
F(s,t), F(z,w), (x-s)^2+(y-t)^2-(x-z)^2-(y-w)^2,\\
 & (x-s)F_t - (y-t)F_s , (x-z)F_w - (y-w)F_z \rangle.
\end{align*}

Then,
$
J = \left (  I : (s-z,t-w)^\infty \right ) \cap \mathbb{R}[x,y]
$
is an ideal whose variety contains the Zariski closure of the medial axis.



We now study the medial axis from the perspective of Voronoi cells.
It has been observed that an approximation of the medial axis arises as a subset of the Voronoi diagram of finitely many points sampled densely from a curve \cite{medial_approximation}. We now discuss theoretical results given in \cite{skeletons} about the convergence of medial axes.
Let $X$ be a compact smooth algebraic plane curve, and let $A_\epsilon$ be an $\epsilon$-approximation of $X$. A Voronoi cell $Vor_{A_\epsilon}(a_\epsilon)$ for $a_\epsilon \in A_\epsilon$ is polyhedral, meaning it is an intersection of half-spaces.

\begin{definition}
\label{def:short_long_edges}
 For sufficiently small $\epsilon$, exactly two edges of $Vor_{A_\epsilon}(a_\epsilon)$ will intersect $X$ \cite{skeletons}. We call these edges the \emph{long edges} of the Voronoi cell, and all other edges are called \emph{short edges}. An example is given in Figure \ref{fig:short_long}.
\end{definition}

\begin{figure}[h]
    \centering
    \includegraphics[height=2.5in]{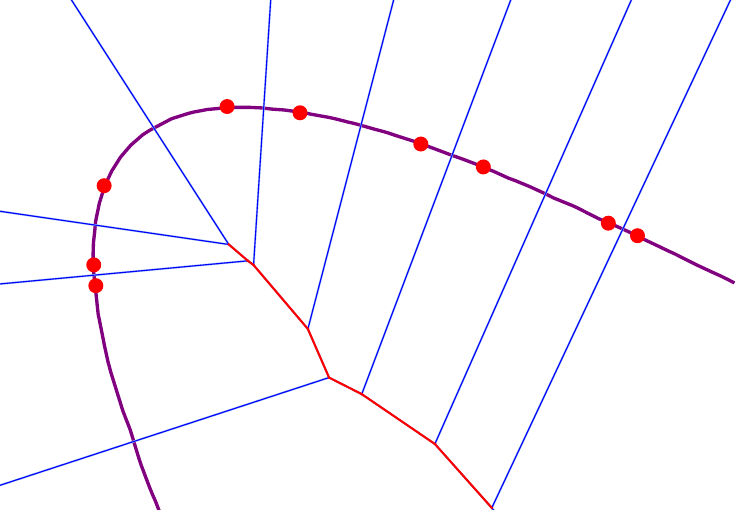}
    \caption{The long edges (blue) and short edges (red) of Voronoi cells of points sampled from the butterfly curve as in Definition \ref{def:short_long_edges}.}
    \label{fig:short_long}
\end{figure}

In this case, let $\hat{S}_\epsilon(a_\epsilon)$ denote the union of the short edges and vertices of the Voronoi cell $Vor_{A_\epsilon}(a_\epsilon)$. An \emph{$\epsilon$-medial axis approximation} is the set of all short edges
$$
\hat{S}_\epsilon = \bigcup_{p \in A_\epsilon} \hat{S}_\epsilon(p).
$$

\begin{proposition}(\cite[Theorem 3.4]{skeletons})\label{prop:brandt_medial}
Let $X$ be a compact smooth algebraic plane curve. The medial axis approximations $\hat{S}_\epsilon$ converge to the Euclidean closure of the medial axis.
\end{proposition}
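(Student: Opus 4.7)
The plan is to verify convergence of $\hat{S}_\epsilon$ to $\overline{M(X)}$ in the hit-miss topology, which by \cite[page 10]{matheron} is equivalent to Wijsman convergence, by checking both conditions of Definition \ref{def:semicontinuous} with $F_\epsilon = \hat{S}_\epsilon$ and the limit set equal to the Euclidean closure $\overline{M(X)}$ of the medial axis.

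First I would handle condition (\ref{def:lower}): if $p_\epsilon \in \hat{S}_\epsilon$ and $p_\epsilon \to p$, then $p \in \overline{M(X)}$. Each such $p_\epsilon$ is equidistant from at least two samples $a_\epsilon, b_\epsilon \in A_\epsilon$ that jointly minimize the distance from $p_\epsilon$ to $A_\epsilon$, and by the definition of a short edge the relevant segment of equidistant points does not cross $X$. Using compactness of $X$ and passing to a subsequence, $a_\epsilon \to a$ and $b_\epsilon \to b$ in $X$. Density of the samples forces $d(p,a) = d(p,b) = d(p,X)$, so $a$ and $b$ are both nearest points to $p$ on $X$. If $a \neq b$, then $p \in M(X)$ by Definition \ref{def:medial}. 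If $a = b$, the short-edge hypothesis prevents $a_\epsilon$ and $b_\epsilon$ from being samples adjacent along $X$, forcing them to approach $a$ from distinct local branches across the normal line; a standard geometric argument then identifies $p$ as a limit of centers of maximal inscribed discs, placing it in $\overline{M(X)}$.

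Next I would handle condition (\ref{def:upper}): every $p \in \overline{M(X)}$ is the limit of some sequence $p_\epsilon \in \hat{S}_\epsilon$. It suffices to treat $p \in M(X)$ and pass to the closure by a diagonal argument. Such a $p$ has distinct nearest points $x_1, x_2 \in X$ at common distance $r$. Pick samples $a_\epsilon^{(i)} \in A_\epsilon$ with $d(a_\epsilon^{(i)}, x_i) \leq \epsilon$. For $\epsilon$ small, the perpendicular bisector of $a_\epsilon^{(1)}a_\epsilon^{(2)}$ passes within $O(\epsilon)$ of $p$, and this bisector contributes an edge of $Vor_{A_\epsilon}(a_\epsilon^{(1)}) \cap Vor_{A_\epsilon}(a_\epsilon^{(2)})$ whose portion near $p$ is short, because $x_1$ and $x_2$ are not adjacent along $X$ and dense sampling of the intervening arcs pushes any crossings with $X$ away from $p$. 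Lemma \ref{lem:voronoi} and Theorem \ref{thm:voronoi_convergence} supply the continuity needed to produce an actual short-edge point $p_\epsilon \in \hat{S}_\epsilon$ with $p_\epsilon \to p$.

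The main obstacle is the coalescent case where two nearest points of a medial-axis sequence merge in the limit, which corresponds to the limit points of $M(X)$ lying at centers of osculating circles on the closure's boundary. Handling this cleanly requires a quantitative local analysis: the smoothness and compactness of $X$ yield a uniform positive lower bound on the separation between non-adjacent branches of $X$ relative to $\epsilon$, so that the short/long dichotomy is stable under perturbation and the set of short edges genuinely tracks the medial axis rather than the curve itself. The remainder is bookkeeping built on the convergence results of Section \ref{sec:limits}.
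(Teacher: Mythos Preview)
The paper does not give its own proof of this proposition: it is quoted verbatim as \cite[Theorem~3.4]{skeletons} and no argument follows. So there is nothing in the paper to compare your sketch against; you are reconstructing the cited result.

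As a reconstruction, your outline is broadly reasonable but has a real gap in condition~(\ref{def:upper}). You choose samples $a_\epsilon^{(1)},a_\epsilon^{(2)}$ near the two nearest points $x_1,x_2$ of $p$ and assert that the perpendicular bisector of $a_\epsilon^{(1)}a_\epsilon^{(2)}$ ``contributes an edge of $Vor_{A_\epsilon}(a_\epsilon^{(1)})\cap Vor_{A_\epsilon}(a_\epsilon^{(2)})$'' near $p$. That bisector need not be a Voronoi edge at all: other samples near $x_1$ or $x_2$ can intervene so that these two particular cells are not adjacent. What you actually need is that \emph{some} short edge of the diagram lies near $p$, and invoking Theorem~\ref{thm:voronoi_convergence} does not deliver this directly, since Wijsman convergence of an individual cell $Vor_{A_\epsilon}(a_\epsilon)\to Vor_X(x)$ says nothing about which boundary edges are short versus long, nor does it control boundaries of cells jointly. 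A correct argument here has to show that the point $p$, being on the common boundary of $Vor_X(x_1)$ and $Vor_X(x_2)$, is approximated by a vertex or short edge of the finite diagram; this is closer to Corollary~\ref{cor:medial_approx} (which in the paper is \emph{derived from} the proposition you are proving) than to Theorem~\ref{thm:voronoi_convergence}, so be careful about circularity. Your handling of condition~(\ref{def:lower}), including flagging the coalescent case $a=b$ as the delicate point, is on target.
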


The following corollary shows the relationship between the medial axis, parts of Voronoi diagrams, and maximally inscribed circles for $\epsilon$-approximations.

\begin{corollary}
Let $\{A_\epsilon\}_{\epsilon \searrow 0}$ be a sequence of $\epsilon$-approximations of a compact smooth algebraic curve ${X \in \mathbb{R}^2}$.
\label{cor:medial_approx}
\begin{enumerate}
    \item The collection of vertices of the Voronoi diagrams of the $A_\epsilon$ converge to the medial axis. 
    \item The collection of centers of maximally inscribed discs of the $A_\epsilon$ converge to the medial axis.
\end{enumerate}
\end{corollary}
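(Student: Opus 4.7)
The plan is to deduce both statements from Proposition~\ref{prop:brandt_medial} by identifying Voronoi vertices with centers of maximally inscribed discs of $A_\epsilon$, and then viewing the vertex set $V_\epsilon$ of the Voronoi diagram of $A_\epsilon$ as a dense subset of the short-edge skeleton $\hat{S}_\epsilon$.

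First I would establish that (1) and (2) describe the same finite collection of points in $\mathbb{R}^2$. For a finite set $A_\epsilon \subset \mathbb{R}^2$, a disc $B$ is maximally inscribed precisely when $B$ is empty of $A_\epsilon$ and its bounding circle meets $A_\epsilon$ in at least three points; with only one or two contact points, $B$ can always be slid along the bisector of the contacts and enlarged. Applying Proposition~\ref{prop:delaunay_faces} to the finite variety $A_\epsilon$, such discs correspond bijectively to $0$-dimensional lower exposed faces of $P^*_{A_\epsilon}$, whose projections to $\mathbb{R}^2$ are exactly the Voronoi vertices of $A_\epsilon$. Hence the centers of maximally inscribed discs coincide with $V_\epsilon$, and (2) reduces to (1).

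For (1), I would first verify that $V_\epsilon \subset \hat{S}_\epsilon$ for all sufficiently small $\epsilon$. By Definition~\ref{def:short_long_edges}, each Voronoi cell has exactly two long edges once $\epsilon$ is small, so every cell contributes at least one short edge; a counting argument on the three cells meeting at a Voronoi vertex forces at least one of its three incident edges to be short, placing the vertex in $\hat{S}_\epsilon$. Proposition~\ref{prop:brandt_medial} then gives $\hat{S}_\epsilon \to \overline{M(X)}$ in Hausdorff distance, so it suffices to show $d_h(V_\epsilon, \hat{S}_\epsilon) \to 0$.

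The main obstacle is this final reduction, i.e., showing that the lengths of the short edges shrink uniformly to zero as $\epsilon \to 0$. A short edge joining vertices $v_1, v_2$ corresponds to a pair of adjacent empty discs through sample triples $\{a,b,c\}$ and $\{b,c,d\}$, where $\{b,c\}$ is the Delaunay edge dual to the short edge. Using smoothness of $X$, compactness of $X$, and a uniform positive lower bound on the reach, one bounds $|v_1 - v_2|$ in terms of the $O(\epsilon)$ displacements among the four samples along $X$, producing a uniform $O(\epsilon)$ bound on the longest short edge. Combining with the Hausdorff convergence $\hat{S}_\epsilon \to \overline{M(X)}$ then gives $V_\epsilon \to \overline{M(X)}$, completing the proof.
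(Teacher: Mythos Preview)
Your route differs from the paper's. The paper's proof is a one-line citation of Theorem~\ref{thm:delaunay_convergence}, Theorem~\ref{thm:voronoi_convergence}, and Proposition~\ref{prop:brandt_medial}; it leans on the two main convergence theorems of Section~\ref{sec:limits} to supply the ``every point of $\overline{M(X)}$ is approached by a Voronoi vertex / circumcenter'' direction. You instead reduce (2) to (1) via Voronoi--Delaunay duality for finite sets (a clean identification the paper does not spell out) and then try to get (1) from Proposition~\ref{prop:brandt_medial} alone, avoiding Theorems~\ref{thm:delaunay_convergence} and~\ref{thm:voronoi_convergence} entirely. That would be a genuinely more elementary argument if it went through.

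Two issues. First, your counting argument for $V_\epsilon \subset \hat{S}_\epsilon$ is unnecessary: by the paper's definition, $\hat{S}_\epsilon(a_\epsilon)$ is the union of the short edges \emph{and vertices} of $Vor_{A_\epsilon}(a_\epsilon)$, so the containment is immediate. Second, and this is the real gap, your decisive step---a uniform $O(\epsilon)$ bound on short-edge lengths---is asserted rather than proved. For a short Voronoi edge dual to a Delaunay edge $\{b,c\}$ that crosses the medial axis, $b$ and $c$ lie on different arcs of $X$ and are not $O(\epsilon)$-close; bounding $|v_1-v_2|$ means controlling how the circumcenter moves under the flip $\{a,b,c\}\to\{b,c,d\}$, and ``smoothness, compactness, and reach'' is a list of hypotheses, not a mechanism. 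This estimate is exactly the content you would need to make your approach self-contained. The paper sidesteps it: each $p\in\overline{M(X)}$ is the center of the inscribed disc of some maximal Delaunay cell of $X$, and Theorem~\ref{thm:delaunay_convergence} produces Delaunay cells of $A_\epsilon$ converging to it, hence circumcenters (Voronoi vertices) approaching $p$; the opposite inclusion comes from $V_\epsilon\subset\hat S_\epsilon$ and Proposition~\ref{prop:brandt_medial}.
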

\begin{proof}
This is a consequence of Theorem \ref{thm:delaunay_convergence}, Theorem \ref{thm:voronoi_convergence}, and Proposition \ref{prop:brandt_medial}.
\end{proof}

\begin{example}
In Figure \ref{fig:trott_medial} we display the centers of maximally inscribed circles, or equivalently circumcenters of the Delaunay triangles, for an $\epsilon$-approximation of the butterfly curve where 898 points were sampled. In Figure \ref{fig:butterfly_v_medial} we show the short edges of Voronoi cells from an $\epsilon$-approximation of the butterfly curve where 101 points were sampled.
\end{example}

\begin{figure}[h]
\centering
\begin{minipage}{.47\textwidth}
  \centering
  \includegraphics[width=\linewidth]{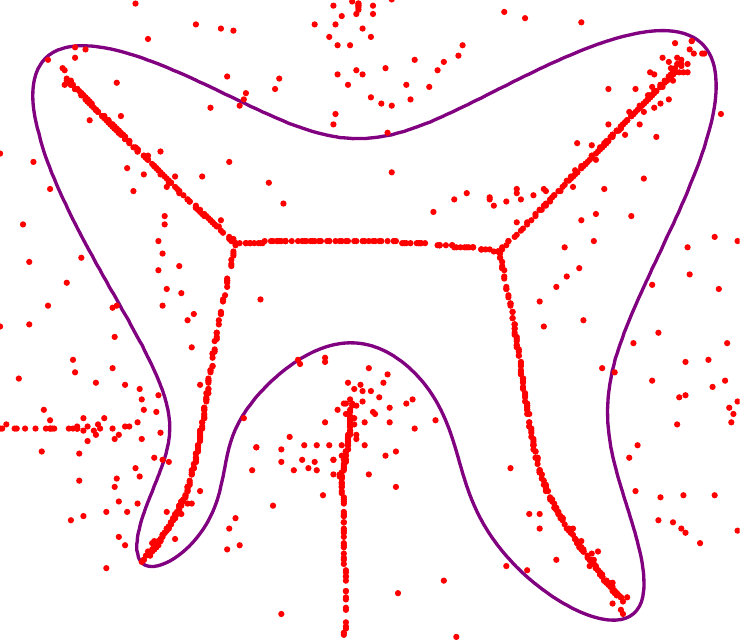}
  \captionof{figure}{A medial axis approximation of the butterfly curve obtained from circumcenters of Delaunay triangles, which are shown in red.}
  \label{fig:trott_medial}
\end{minipage}%
\begin{minipage}{.47\textwidth}
  \centering
  \includegraphics[width=\linewidth]{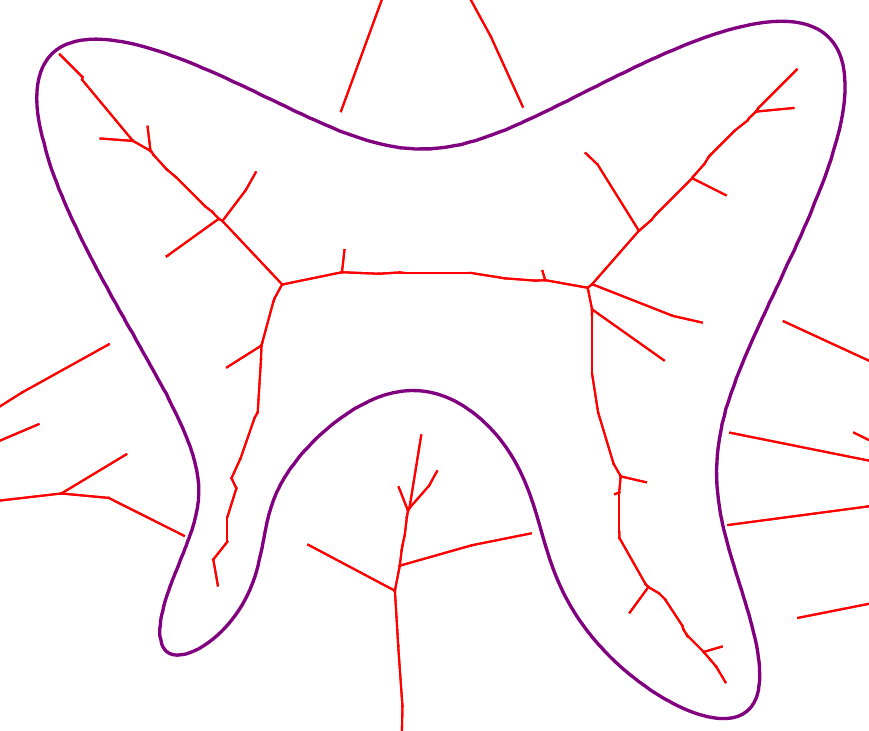}
  \captionof{figure}{A medial axis approximation of the butterfly curve obtained from short edges of Voronoi cells, which are shown in red.}
  \label{fig:butterfly_v_medial}
\end{minipage}
\end{figure}


The medial axis plays an important role in applications for understanding the connected components and regions of a shape. As such, it is a very well-studied problem in computational geometry to find approximations of the medial axis from point clouds. 
A survey on medial axis computation is given in \cite{medial_survey}.


\section{Curvature and the Evolute}\label{sec:evolute}

Curvature of plane curves, osculating circles and evolutes have interested mathematicians since antiquity. 
We refer readers to works of Salmon in the 19th century \cite{Salmon1, Salmon2} and to modern lectures by Fuchs and Tabachnikov outlining this history \cite[Chapter 3]{Fuchs}.


We now discuss the minimal radius of curvature of a plane curve. This is one of the two quantities which determines the reach, see Equation \ref{eqn:reach_min}. 

\begin{definition}\label{def:curvature}
Let $X\subset \mathbb{R}^2$ be an algebraic curve and $p \in X$ be a smooth point that is not a point of inflection. The \emph{osculating circle} at $p$ is the circle that passes through $p$ and an additional pair of points infinitesimally close to $p$. The \emph{center of curvature} at $p$ is the center of the osculating circle at $p$. The \emph{radius of curvature} at $p$ is the distance from $p$ to its center of curvature. 
\end{definition}

An alternative definition of center and radius of curvature can be given using envelopes. 

\begin{definition}
The \emph{envelope} of a one-parameter family of algebraic plane curves given implicitly by 
${F(x,y,t)=0}$
is a curve that touches every member of the family tangentially. The envelope is the variety defined by the ideal 
\[ \left \langle \frac{\partial F}{\partial t}, \  F(x,y,t) \right \rangle \cap \mathbb{R}[x,y]. \] 
\end{definition}

The envelope of the family of normal lines parametrized by the points of the curve is called its \emph{evolute}. Equivalently, the evolute is the locus of the centers of curvature.
A generalization of the evolute to all dimensions is called the \emph{ED discriminant}, and is studied in \cite{ed}.
The article shows that for general smooth algebraic plane curves, the degree of the evolute is
$3d(d-1)$ \cite[Example 7.4]{ed}. See also \cite[p.95-96]{Salmon2} for a discussion of the degree of the evolute.

We now derive a formula for the square of the radius of curvature of a plane curve at a point. Our derivation follows Salmon \cite[p.84-98]{Salmon2}. 

\begin{proposition}\cite[p.84-86]{Salmon2}
Let $X=V(F(x,y)) \in \mathbb{R}^2$ be a smooth curve of degree $d$. The square of the radius of curvature at a point $(x_0,y_0) \in V(F)$ that is not a point of inflection is given by following expression in the partial derivatives of $F$ evaluated at $(x_0,y_0)$:
\begin{equation}\label{urc} 
    R^2= \frac{ (F_x^2+F_y^2)^3}{(F_{xx}F_{y}^2-2F_{xy}F_xF_y+F_{yy} F_x^2)^2}. 
\end{equation}
\end{proposition}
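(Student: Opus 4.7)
The plan is to parametrize $X$ locally near $p$ by arc length $s \mapsto \gamma(s) = (x(s), y(s))$ with $\gamma(0) = p$, to translate the Cauchy definition of radius of curvature into the familiar identity $R = 1/|\ddot\gamma(0)|$, and then to compute $|\ddot\gamma(0)|$ directly from the implicit equation $F(x(s), y(s)) \equiv 0$. The right-hand side of \eqref{urc} will drop out of two successive differentiations of this identity, combined with the unit-speed constraint.

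\textbf{Key steps.} First, differentiating $F \circ \gamma \equiv 0$ once yields $F_x x' + F_y y' = 0$; together with $(x')^2 + (y')^2 = 1$ this pins down the unit tangent $(x', y')$ (up to orientation) as the $90^\circ$ rotation of $\nabla F / \sqrt{F_x^2 + F_y^2}$. Second, differentiating the same identity a second time gives
\[
F_x x'' + F_y y'' \;=\; -\bigl(F_{xx}(x')^2 + 2 F_{xy} x' y' + F_{yy}(y')^2\bigr),
\]
and substituting the expression for $(x', y')$ from the first step rewrites the right-hand side as $-\bigl(F_{xx}F_y^2 - 2 F_{xy} F_x F_y + F_{yy} F_x^2\bigr)/(F_x^2 + F_y^2)$, which is (up to a factor) the denominator of \eqref{urc}. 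Third, differentiating the unit-speed identity gives $x' x'' + y' y'' = 0$, forcing the acceleration $(x'', y'')$ to be parallel to $\nabla F$; writing $(x'', y'') = c(F_x, F_y)$ and using the value of $F_x x'' + F_y y''$ just computed determines $c$ and hence $|\ddot\gamma| = |c|\sqrt{F_x^2 + F_y^2}$. A routine simplification yields $|\ddot\gamma(0)| = |D|/(F_x^2 + F_y^2)^{3/2}$ with $D = F_{xx}F_y^2 - 2 F_{xy} F_x F_y + F_{yy} F_x^2$, and inverting gives \eqref{urc} up to the sign that comes from the choice of orientation of the unit normal.

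\textbf{Main obstacle.} The only non-algebraic point is justifying $R = 1/|\ddot\gamma(0)|$ under Cauchy's definition, since that definition speaks of the limiting intersection of the normal at $\gamma(0)$ with the normal at an ``infinitely close'' point of $\gamma$ rather than invoking the Frenet frame. I would discharge this either by a direct limit computation in the arc-length parametrization, solving for the intersection of the normals at $\gamma(0)$ and $\gamma(h)$ and letting $h \to 0$, or, equivalently, by invoking the envelope description recalled just before the proposition statement: the envelope of the family of normal lines along $\gamma$ is the locus of centers of curvature, and the envelope condition $\partial_s N = 0$ (with $N(X,Y,s)$ the implicit equation of the normal at $\gamma(s)$) gives the center of curvature as $\gamma(0) + |\ddot\gamma(0)|^{-1}\mathbf{n}$ in the arc-length parametrization. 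Once this identification is in place, the remainder of the argument is the routine substitution outlined above.
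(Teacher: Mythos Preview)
Your argument is correct, and it is a genuinely different route from the paper's. The paper follows Salmon directly: it writes the normal line at $(x,y)$ as $F_y(\alpha-x)-F_x(\beta-y)=0$ in the unknowns $(\alpha,\beta)$, takes the total derivative of this equation with respect to $x$ (treating $y$ as an implicit function of $x$), uses $F_x+F_y\,dy/dx=0$ to eliminate $dy/dx$, and then solves the resulting $2\times 2$ linear system for $(\alpha,\beta)$ explicitly. The radius is then read off as $\sqrt{(\alpha-x)^2+(\beta-y)^2}$. In other words, the paper stays with Cauchy's definition throughout and never passes through the identity $R=1/|\ddot\gamma|$; it parametrizes by $x$ rather than by arc length and computes the center of curvature before the radius.

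Your approach via arc-length parametrization and the Frenet relation $\kappa=|\ddot\gamma|$ is cleaner algebraically (the symmetry in $x$ and $y$ is manifest, and you avoid the asymmetric intermediate expressions in $dy/dx$), but it costs you the extra step you flag as the ``main obstacle'': bridging Cauchy's nearby-normals definition to $R=1/|\ddot\gamma|$. The paper's approach avoids that bridge entirely and, as a bonus, produces explicit formulas for the center $(\alpha,\beta)$ along the way, which feed directly into the evolute discussion that follows. Either argument is perfectly adequate here.
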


\begin{proof}
The equation of a normal line to $X$ at a point $(x,y) \in X$  in the variables $(\alpha, \beta)$ is  
\begin{equation}\label{nl}
    F_y(\alpha-x)-F_x(\beta-y)=0. 
\end{equation}

The total derivative of the equation for the normal line is 
\begin{equation}\label{td1}
    \left(F_{xy}+F_{yy}\frac{dy}{dx}\right)(\alpha-x)-\left(F_{xx}+F_{xy}\frac{dy}{dx}\right)(\beta-y)-F_y+F_x\frac{dy}{dx}=0. 
\end{equation}

The total derivative of $F(x,y)$ is 
\begin{equation}\label{td2}
    F_x(x,y)+F_y\frac{dy}{dx} = 0.
\end{equation}

 The equations \eqref{nl}, \eqref{td1} are a system of two linear equations in the unknowns $\{ \alpha, \beta \}$. We solve this system to obtain expressions for $\alpha$ and $\beta$ in terms of $x$, $y$, and $\frac{dy}{dx}$. We substitute in for $\frac{dy}{dx}$ the expression given by \eqref{td2}. The center of curvature of $X$ at a point $(x,y) \in X$ is given by the coordinates $(\alpha, \beta)$, which are now expressions in $x$ and $y$. 
 
 The squared radius of curvature at a point $(x,y)$ is its squared distance to its center of curvature $(\alpha, \beta)$, so we have 
$ R^2=(\alpha-x)^2+(\beta-y)^2$.
Substituting in the equations for $\alpha$ and $\beta$, we find 
 \begin{equation*}
 R^2= \frac{ (F_x^2+F_y^2)^3}{(F_{xx}F_{y}^2-2F_{xy}F_xF_y+F_{yy} F_x^2)^2}. 
 \end{equation*}
 We note that the denominator evaluates to zero only at points of inflection.
 \end{proof}


\begin{definition}
The \emph{degree of critical curvature} of a smooth algebraic curve $X \subset \RR^2$ is the degree of the variety obtained by intersecting the Zariski closure $\overline{X} \subset \mathbb{P}_{\mathbb{C}}^2$  with the variety of the total derivative of the equation for the squared radius of curvature. If $X \subset {\mathbb{R}}^2$ is a smooth, irreducible algebraic curve of degree greater than or equal to $3$, then the intersection consists of finitely many points, called the \emph{points of critical curvature}. Thus the degree of critical curvature of $X$ gives an upper bound for the number of real points of critical curvature of $X$. 
\end{definition}

\begin{remark}
In the differential geometry literature, points of critical curvature are called \textit{vertices}.
\end{remark}

\begin{theorem}[\cite{Salmon2}, p.97]
Let $X \subset {\mathbb{R}}^2$ be a smooth, irreducible algebraic curve. Then the degree of critical curvature of $X$ is $6d^2-10d$. 
\end{theorem}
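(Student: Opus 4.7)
\emph{Proof plan.} My plan is to work projectively using the formula in Corollary~\ref{rc} and invoke Bezout's theorem in $\PP^2_{\CC}$. Let $F$ now denote the homogenization of the defining polynomial (of degree $d$ in coordinates $(x,y,z)$) and write $R^2 = (d-1)^4 P^3/(z^4 H^2)$, where $P := F_x^2 + F_y^2$ is homogeneous of degree $2d-2$ and $H$ is the $3\times 3$ Hessian determinant from the denominator of \eqref{rc}, homogeneous of degree $3d-6$. A point of $\overline{X}$ is a critical point of the radius of curvature precisely when the logarithmic-derivative $1$-form
\[
\omega \;:=\; 3Hz\,dP \;-\; 4PH\,dz \;-\; 2Pz\,dH
\]
restricts to zero on the tangent line to $\overline{X}$ at that point.

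The next step is to convert this into a polynomial condition by contracting $\omega$ against the tangent representative $\vec t := (F_y,-F_x,0)$, which satisfies $dF(\vec t) = 0$. A direct computation then yields
\[
\omega(\vec t) \;=\; z\cdot \Phi^{\mathrm{proj}}, \qquad \Phi^{\mathrm{proj}} \;:=\; 3H\bigl(P_xF_y - P_yF_x\bigr) - 2P\bigl(H_xF_y - H_yF_x\bigr),
\]
and a degree count gives $\deg \Phi^{\mathrm{proj}} = (3d-6)+(2d-3)+(d-1) = 6d-10$. The prefactor $z$ is spurious: at any infinity point $(x_0{:}y_0{:}0)\in\overline{X}$, Euler's identity for $f = F_d$ gives $x_0 f_x + y_0 f_y = 0$, which forces $(f_y,-f_x,0)$ to be a scalar multiple of the radial vector $(x_0,y_0,0)$, so $\vec t$ vanishes in the projective tangent space to $\overline{X}$ there. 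Replacing $\vec t$ by a nondegenerate representative such as $(F_z,0,-F_x)$ and evaluating at a generic infinity point of $\overline{X}$ confirms that $\omega|_{T\overline{X}}$ is nonzero there, so the true critical locus on $\overline{X}$ is cut out by $\Phi^{\mathrm{proj}}$ alone.

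Finally, Bezout's theorem applied to $V(\Phi^{\mathrm{proj}})\cap\overline{X}$ in $\PP^2_{\CC}$ yields $d\cdot(6d-10) = 6d^2 - 10d$ intersection points counted with multiplicity, which is the claimed degree of critical curvature. The hardest part of this plan will be verifying the genericity conditions needed to make the intersection proper and free of extraneous contributions at infinity: (i) that $\overline{X} \not\subset V(\Phi^{\mathrm{proj}})$, which follows since $R$ is nonconstant on any smooth irreducible curve of degree $\ge 3$ other than a line or circle; and (ii) that $V(\Phi^{\mathrm{proj}})$ meets $\overline{X}$ only at finite points, which reduces to showing that the restriction $H|_{z=0}$ does not vanish identically on $V(F_d)$. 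The latter involves the coefficients of $F_{d-1}$ and $F_{d-2}$ through the mixed partials $F_{xz}, F_{yz}, F_{zz}$ evaluated at $z=0$, so it holds for generic choices of those lower-degree homogeneous components of $F$.
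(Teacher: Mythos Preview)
Your approach is essentially the same as the paper's. Both differentiate the projective radius-of-curvature formula \eqref{rc}, clear denominators, and arrive at a homogeneous polynomial of degree $6d-10$ (your $\Phi^{\mathrm{proj}}$ coincides, up to a factor of $2$, with the homogenization of the paper's equation \eqref{cc}, since $P_xF_y-P_yF_x = 2[(F_{xx}-F_{yy})F_xF_y+F_{xy}(F_y^2-F_x^2)]$), then apply B\'ezout to the intersection with $\overline{X}$. The only difference is packaging: the paper dehomogenizes first and takes a total derivative directly, whereas you work with the logarithmic $1$-form $\omega$ and contract against a tangent representative; your discussion of why the extraneous factor $z$ should be discarded and why the intersection avoids infinity goes beyond what the paper's own (rather terse) argument addresses, but is not needed given the paper's definition of the degree of critical curvature as simply the intersection number in $\PP^2_{\CC}$.
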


We remark that the critical points of curvature of $X$ give cusps on the evolute \cite[p.97]{Salmon2}. 
That is, if a normal line is drawn through a point of critical curvature on a curve, then the normal line will pass through a cusp of the evolute. 
In addition, the evolute of a curve of degree $d$ has $d$ cusps at infinity \cite[p.95]{Salmon2}. Thus the evolute of a general plane curve of degree $d$ has $6d^2-10d+d=6d^2-9d$ cusps \cite[p.97]{Salmon2}. In Figure \ref{fig:evolute}, we picture the evolute, the butterfly curve, and the pairs of critical curvature points on the butterfly curve with their corresponding cusp on the evolute.

\begin{figure}[h]
    \centering
    \includegraphics{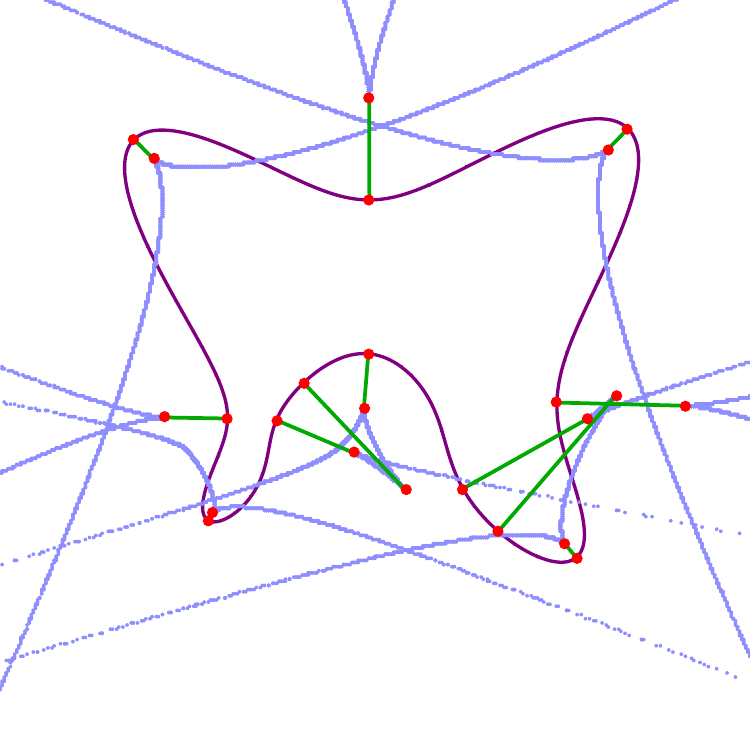}
    \caption{The twelve real points of critical curvature on the butterfly curve, computed in Example \ref{ex:curvature}, joined by green line segments to their centers of curvature. These give cusps on the evolute, which is pictured here in light blue.}
    \label{fig:evolute}
\end{figure}

\begin{example}
\label{ex:curvature}
Consider the butterfly curve. Using the above description, we can compute the 56 points of critical curvature using \texttt{JuliaHomotopyContinuation} \cite{julia}. Twelve of these points are real, and they are plotted in Figure \ref{fig:evolute}. The maximal curvature is approximately $9.65$. This is achieved at the lower left wing of the butterfly.
\end{example}


We now describe how to recover the curvature at a point from the Voronoi cells of a subset of a curve $X$. In applications, Voronoi-based methods are used for obtaining estimates of curvature at a point. 
An overview of techniques for estimating curvature of a variety from a point cloud is given in \cite{feature_detection}. Further, there are also  Delaunay-based methods for estimating curvature of a surface in three dimensions \cite{delaunay_curvature}. 

\begin{theorem}
\label{thm:curvature}
Let $X$ be a smooth, irreducible plane curve of degree at least $3$ and $p\in X$ a point that is not a critical point of curvature. 
Let $\delta$ be less than the distance to the critical point of curvature nearest to $p$, and
let $B(p, \delta)$ be a ball of radius $\delta$ centered at $p$. Then
\begin{enumerate}
    \item
    \label{thm:1pt1}
    The Voronoi cell $\text{Vor}_{X \cap B(p, \delta)}(p)$ is a ray. The distance from $p$ to the endpoint of this ray is the radius of curvature of $X$ at $p$.
    \item
    \label{thm:1pt2}
    Consider a sequence of $\epsilon$-approximations $A_\epsilon$ of $X \cap B(p,\delta)$. Let $a_\epsilon$ be a point such that $p \in Vor_{A_\epsilon}(a_\epsilon)$, and let $d_\epsilon$ be the minimum distance from $a_\epsilon$ to a vertex of $Vor_{A_\epsilon}(a_\epsilon)$. Then, the sequence $d_\epsilon$ converges to the radius of curvature of $X$ at $p$.
\end{enumerate}
\end{theorem}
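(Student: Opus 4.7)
The plan is to prove part~(1) by direct geometric analysis in local coordinates and then deduce part~(2) from the convergence results of Section~\ref{sec:limits}. For part~(1), since $p$ is a smooth point of the codimension-$1$ variety $X$, the Voronoi cell $\mathrm{Vor}_{X\cap B(p,\delta)}(p)$ lies in the normal line $N$ at $p$ and is convex, hence an interval of $N$. To see unboundedness in the outward direction, I would work in local coordinates at $p$ with the tangent along the $x$-axis and the curve given by $y = \tfrac{\kappa}{2}x^2 + O(x^3)$; for any $q = (0,-t)$ on the outward normal and any $p' = (s, y(s))$ in $X\cap B(p,\delta)\setminus\{p\}$ one checks $|q-p'|^2 - |q-p|^2 = s^2 + 2ty(s) + y(s)^2 > 0$, so $p$ is the unique closest point and the cell extends to infinity in the outward direction.

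For the inward endpoint I would interpret the boundary of the Voronoi cell as the locus where two nearest points on $X\cap B(p,\delta)$ exist, so the endpoint is the infimum over $p' \in X \cap B(p,\delta) \setminus \{p\}$ of $t^*(p')$, the crossing of the perpendicular bisector of $pp'$ with $N$. Geometrically, $t^*(p')$ is the radius of the unique circle through $p$ and $p'$ tangent to $X$ at $p$, and as $p' \to p$ along $X$ these circles limit to the osculating circle, so $t^*(p') \to r$. The hypothesis that $\delta$ is less than the distance to the nearest critical curvature point is meant to guarantee that $\inf_{p'} t^*(p') = r$; equivalently, that the open osculating disc at $p$ does not meet $X$ again inside $B(p,\delta)$, so that the closed disc of radius $r$ centered at the center of curvature $c(p)$ touches $X\cap B(p,\delta)$ only at $p$, placing $c(p)$ on the boundary of the Voronoi cell at distance exactly $r$. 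I expect this to follow from the fact that on each branch of $X\cap B(p,\delta)\setminus\{p\}$ the radius of curvature is monotone (there are no critical curvature points inside $B(p,\delta)$), via a nested-osculating-circle argument that precludes a second intersection. Establishing this monotonicity-based exclusion rigorously is the main obstacle of the proof.

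For part~(2), I would apply Theorem~\ref{thm:voronoi_convergence} to $X\cap B(p,\delta)$ to conclude that $\mathrm{Vor}_{A_\epsilon}(a_\epsilon) \to \mathrm{Vor}_{X\cap B(p,\delta)}(p)$ in the Wijsman topology. The vertices of the finite polygonal cell $\mathrm{Vor}_{A_\epsilon}(a_\epsilon)$ are circumcenters of $a_\epsilon$ with pairs of other points of $A_\epsilon$; for small $\epsilon$ the minimum-distance such vertex is the circumcenter of $a_\epsilon$ with its two immediate neighbors along $X$. By continuity of the circumcenter map on triples of nearby points together with part~(1), this circumcenter converges to $c(p)$, and hence $d_\epsilon \to r$. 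That this particular circumcenter realizes the minimum distance follows from the ray shape of the limiting Voronoi cell established in part~(1): for small $\epsilon$ the cells $\mathrm{Vor}_{A_\epsilon}(a_\epsilon)$ inherit a shape pinched in the tangential direction, whose narrowest inward vertex is precisely this three-consecutive-point circumcenter.
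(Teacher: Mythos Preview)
For part~(1) your route diverges from the paper's. The paper does not attempt a direct osculating-circle estimate; instead it argues by contradiction through the medial axis: if the Voronoi endpoint is not the center of curvature then it lies on the medial axis of $X\cap B(p,\delta)$, and any such medial axis must have an endpoint at the center of curvature of a point of critical curvature (citing \cite{medial_endpoints}), contradicting the choice of $\delta$. This sidesteps your local computation entirely.

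The obstacle you flag is genuine, and the monotonicity/Tait--Kneser heuristic does not overcome it. At a non-vertex $p$ the curve meets its osculating circle $C_p$ with contact of odd order, so the curve \emph{crosses} $C_p$ there: on the branch where curvature increases away from $p$ the arc lies strictly inside the open osculating disc (indeed Tait--Kneser places the smaller osculating circles along that branch inside $C_p$, and the curve points with them). In your notation this gives $t^*(p') = r - c\,s + O(s^2)$ with $c\neq 0$ on that branch, so $\inf_{p'} t^*(p')<r$ and the proposed exclusion ``the open osculating disc at $p$ does not meet $X$ again inside $B(p,\delta)$'' is simply false at non-vertices. Whatever closes this gap, it is not monotonicity of curvature alone; the paper's medial-axis-endpoint device is what it invokes instead.

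For part~(2) the paper again takes a shorter path. Rather than identifying the nearest vertex explicitly as a three-consecutive-point circumcenter, it applies Corollary~\ref{cor:medial_approx} to the arc $X\cap B(p,\delta)$ to conclude that the vertex set $V_\epsilon$ of $\mathrm{Vor}_{A_\epsilon}(a_\epsilon)$ Wijsman-converges to the single endpoint $c(p)$ of the limiting ray, and then reads off $d_\epsilon = d_w(p,V_\epsilon)\to d_w(p,c(p))=r$ directly from the definition of Wijsman convergence. Your circumcenter argument would additionally need to justify that this particular circumcenter always realizes the minimum distance among all vertices of the cell, which you leave at the level of a sketch.
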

\begin{proof}
The Voronoi cell $\text{Vor}_{X \cap B(p, \delta)}(p)$ is a subset of the line normal to $X \cap B(p, \delta)$ at $p$. This line has an endpoint either at the center of curvature of $p$ or at a point where it intersects the normal space of a distinct point $p'$ in $X\cap B(p,\delta)$. The point where the normals at $p$ and $p'$ intersect is contained in the Voronoi cell with respect to $X \cap B(p, \delta)$ of each of them, so in particular $X \cap B(p,\delta)$ has a nonempty medial axis. This medial axis has an endpoint which corresponds to a point of critical curvature \cite{medial_endpoints}. This contradicts the constraint on $\delta$. Therefore, the endpoint of the Voronoi cell $\text{Vor}_{X \cap B(p, \delta)}(p)$ is the center of curvature of $p$. This concludes the proof of (\ref{thm:1pt1}).

For (\ref{thm:1pt2}), we know that the sequence $Vor_{A_\epsilon}(a_\epsilon)$ is Wijsman convergent to $Vor_{X \cap B(p, \delta)}(p)$ by Theorem \ref{thm:voronoi_convergence}. Denote by $V_\epsilon$ the set of vertices of $Vor_{A_\epsilon}(a_\epsilon)$. By Corollary \ref{cor:medial_approx}, we also have that the sets $V_\epsilon$ are Wijsman convergent to the endpoint of $Vor_{X \cap B(p,\delta)}(p)$, which we call $p'$. By the definition of Wijsman convergence, this means that for any $x \in \mathbb{R}^2$, $d_w(x,V_\epsilon) \rightarrow d_w(x,p')$. By the definition of $d_w$, we have $d_w(p,V_\epsilon) = d_\epsilon$ and $d_w(p,p')$ is the radius of curvature of $p$. This concludes the second part of the proof.
\end{proof}

The evolute $E$ of a plane curve is the locus of all centers of curvature of the curve. Therefore, to find the evolute using Voronoi cells we may splice the curve into sections and apply Theorem~\ref{thm:curvature}.
Let $X$ be compact and irreducible of degree greater than or equal to $3$. Let $C \subset X$ denote the points of locally maximal curvature and $E_C$ denote the centers of curvature corresponding to points in $C$. Then $X \backslash C$ consists of finitely many components $X \backslash C= X_1\cup \cdots \cup X_n$. Let $\tau$ denote the reach of $X$, and cover each $X_i$ by balls $B_{i,j}$ of radius less than $\tau$.
Let $E_{i,j}$ denote the collection of vertices of Voronoi cells of $X_i \cap B_{i,j}$. Then by Theorem \ref{thm:curvature}, $E \backslash E_C= \cup_{i,j} \overline{E_{i,j}}$. 
Furthermore, for $\epsilon$-approximations $A_{\epsilon,i,j}$ of $X_i \cap B_{i,j}$, the union over $i,j$ of their Voronoi vertices will converge to $E \backslash E_C$ by Theorem \ref{thm:voronoi_convergence}. To find the evolute $E$ one need only to add the finite set of points $E_C$. 


\section{Bottlenecks}\label{sec:bottlenecks}


As in the colloquial sense of the word, a bottleneck refers to a narrowing of a variety, or a place where it gets closer to self-intersection. 
Consider a smooth algebraic variety $X \subset \RR^n$. We define $a\perp b$ by $\sum_{i=1}^na_ib_i=0$ for
$a=(a_1,\dots,a_n),$ $b= (b_1,\dots,b_n) \in \RR^n$.
For a point $x \in X$,
let $(T_xX)_0$ denote the embedded tangent space of $X$ translated to
the origin. Then the \emph{Euclidean normal space} of $X$ at $x$ is
defined as $N_xX=\{z \in \RR^n:(z-x) \perp (T_xX)_0\}$.

\begin{definition}
\label{def:bottleneck}
A \emph{bottleneck} of a smooth algebraic variety $X \subset \RR^n$ is a pair of
distinct points $(x,y) \in X \times X$ such that $\overline{xy} \subseteq N_xX \cap N_yX$, where $\overline{xy}$ is the line spanned by $x$ and $y$. 
\end{definition}

We note that bottlenecks are given not only by the narrowest parts of the variety, but also by maximally wide parts of the variety, as our algebraic definition considers all critical points rather than just the minimums. 
The bottlenecks of the butterfly curve are shown in Figure \ref{fig:tooth_bottle}.

\begin{definition}
\label{def:bottleneck_distance}
The \emph{narrowest bottleneck distance} $\rho$ of a variety $X \subset \RR^n$ is 
\[ \rho(X)=\inf_{(x,y) \textbf{ a bottleneck}} d(x,y)  \]
where $d(x,y)$ is the Euclidean distance between $x$ and $y$. 
\end{definition}


We will now describe the \emph{bottleneck locus} in
$\RR^{2n}$ which consists of the bottlenecks of $X$ \cite{DEW}.  Let $(f_1,\dots,f_k)
\subseteq \RR[x_1,\dots,x_n]$ be the ideal of $X$. Consider the ring
isomorphism $\phi: \RR[x_1,\dots,x_n] \to \RR[y_1,\dots,y_n]$ defined by $x_i \mapsto
y_i$ and let $f'_i=\phi(f_i)$. Then $f_i$ and $f_i'$ have gradients
$\nabla f_i$ and $\nabla f'_i$ with respect to $\{x_1,\dots,x_n\}$ and
$\{y_1,\dots,y_n\}$, respectively. The \emph{augmented
  Jacobian} $J$ is the following matrix of size $(k+1) \times n$ with
entries in $R=\RR[x_1,\ldots,x_n, y_1,\ldots, y_n]$:
\begin{equation*} \label{eq:augjac}
J = 
\begin{bmatrix}
    y-x \\
    \nabla f_1 \\
    \vdots \\
    \nabla f_k
\end{bmatrix} ,
\end{equation*}
where $y-x$ is the row vector $(x_1-y_1,\dots,x_n-y_n)$. Let $N$
denote the ideal in $R$ generated by $(f_1,\dots,f_k)$ and the
$(n-dim(X)+1) \times (n-dim(X)+1)$ minors of $J$. Then the points $(x,y)$ of the
variety defined by $N$ are the points $(x,y) \in X \times X \subset \RR^{2n}$ such
that $y \in N_xX$. In the same way we define a matrix $J'$ and an
ideal $N' \subseteq R$ by replacing $f_i$ with $f'_i$ and $\nabla f_i$
with $\nabla f'_i$.

The \emph{bottleneck locus} $B$ is the variety
\begin{equation}
 B = V((N + N'): \langle x-y\rangle ^{\infty}) \subset X \times X \subset \RR^{2n}.
\end{equation}
The saturation removes the diagonal, as $(x,y)$ is not a bottleneck if $x=y$. 

\begin{figure}[h]
    \centering
    \includegraphics[height = 3.3 in]{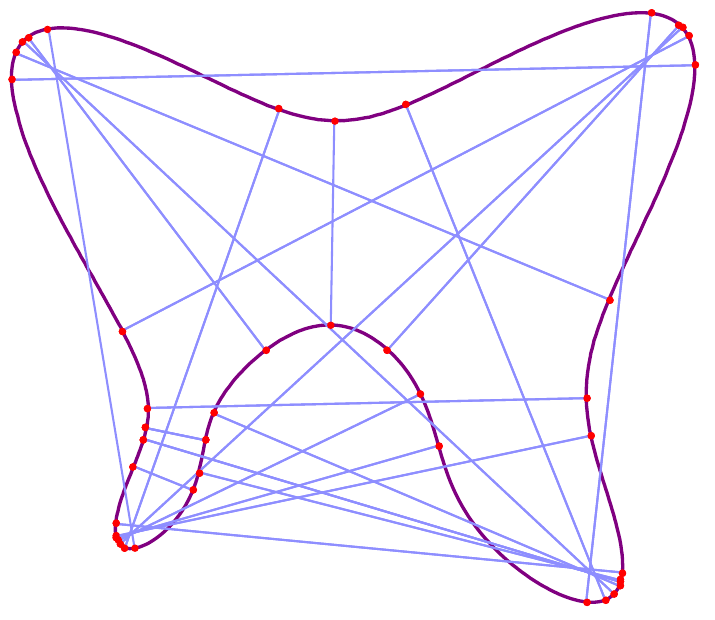}
    \caption{The real bottleneck pairs of the butterfly curve, computed in Example \ref{ex:bottlenecks}.}
    \label{fig:tooth_bottle}
\end{figure}


Next, we give the bottleneck degree, which is 
a measure of the complexity of computing all bottlenecks of an algebraic variety. We refer readers to \cite{Eklund} for a discussion of the numerical algebraic geometry of bottlenecks. Under suitable genericity assumptions described in \cite{DEW}, it coincides with twice the number of bottlenecks of the complexification of $X$. The factor of $2$ is attributed to the fact that in the product $X \times X$, the points $(x,y)$ and $(y,x)$ are distinct, though they correspond to the same pair of points in $X$. 

\begin{theorem}[\cite{DEW}] 
\label{thm:bottleneck_degree}
Under certain genericity assumptions, the degree of the bottleneck locus of a smooth algebraic curve $X \subset \RR^2$ of degree $d$ is
\[ d^4-5d^2+4d. \] 
\end{theorem}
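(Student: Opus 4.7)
The plan is to realize the bottleneck locus as a four-fold complete intersection in $\PP^2 \times \PP^2$, compute its naive Bezout count, and then subtract the excess contributions from the diagonal and from two further families of spurious intersections. The four defining equations are $F(x) = 0$, $F(y) = 0$, $E_1 := (y-x) \cdot T_x X = 0$ and $E_2 := (y-x) \cdot T_y X = 0$; after bihomogenization these have bidegrees $(d,0)$, $(0,d)$, $(d,1)$, $(1,d)$ respectively. A direct computation in the Chow ring of $\PP^2 \times \PP^2$ with hyperplane classes $h_1, h_2$ gives the ambient intersection number
\[
(dh_1)(dh_2)(dh_1+h_2)(h_1+dh_2) = d^2(d^2+1) = d^4 + d^2,
\]
which I interpret as the intersection number $D_1 \cdot D_2$ of the perpendicularity divisors on the surface $\overline{X} \times \overline{X}$.

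Next I would strip off the diagonal excess. A local expansion along the parameterization of $X$ shows that each $E_i$ vanishes to first order along $\Delta_X$ at a generic point (the factor $(t-s)$ appears with coefficient $|x'(s)|^2 \neq 0$), so I may write $D_i = \Delta + D_i'$ as Cartier divisors. Combining the excess intersection identity
\[
D_1' \cdot D_2' = D_1 \cdot D_2 + \Delta^2 - \Delta \cdot D_1 - \Delta \cdot D_2
\]
with the genus formula $g = \binom{d-1}{2}$, which gives $\Delta^2 = 2-2g = 3d - d^2$, and with $\Delta \cdot D_i = d^2 + d$ (computed by restricting the ambient divisor class $d^2 F_1 + d F_2$ of $D_i$ to the diagonal), I obtain the residual intersection number
\[
D_1' \cdot D_2' = (d^4 + d^2) + (3d - d^2) - 2(d^2 + d) = d^4 - 2d^2 + d.
\]

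It remains to identify and subtract the points of $D_1' \cap D_2'$ that are not genuine bottlenecks. Two families arise, contributing a total of $3d(d-1)$ spurious intersections. First, at each of the $2d(d-1)$ complex points $x$ of $\overline{X}$ where the tangent $T_x X$ is parallel to an isotropic direction $(1, \pm i)$, the Euclidean perpendicularity degenerates; a local Taylor expansion near $(x,x)$ shows each of $E_1, E_2$ factors as $(t-s)\cdot \ell_i$ with $\ell_1$ and $\ell_2$ distinct linear forms in the local parameters $(s,t)$, so the residual branches of $D_1'$ and $D_2'$ pass through $(x,x)$ with transversally meeting tangent directions, contributing exactly one spurious intersection per point. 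Second, at each of the $d(d-1)$ off-diagonal ordered pairs $(x_\infty, y_\infty) \in (\overline{X} \cap \{x_0 = 0\})^2$, the bihomogeneous forms for $E_1, E_2$ vanish automatically because they carry the factors $y_1 x_0 - x_1 y_0$ and $x_1 y_0 - y_1 x_0$, each of which is zero when $x_0 = y_0 = 0$; a local Jacobian check confirms these intersections are transverse, contributing a further $d(d-1)$ spurious points.

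Subtracting the $3d(d-1)$ spurious contributions from $D_1' \cdot D_2'$ yields
\[
d^4 - 2d^2 + d - 3d(d-1) = d^4 - 5d^2 + 4d,
\]
as claimed. The main obstacle is the local analysis at the isotropic tangent points: one must verify that higher-order terms do not enhance the intersection multiplicity beyond one, and must pin down the precise genericity hypotheses under which the spurious loci decompose cleanly as described (for instance, $\overline{X}$ should avoid the circular points $(0{:}1{:}\pm i)$ at infinity, its isotropic tangent points should be distinct from its flexes, and the $d$ points at infinity should be distinct and not isotropic). Once these local verifications are in place, the global count follows from the Bezout and excess intersection framework on $\overline{X} \times \overline{X}$.
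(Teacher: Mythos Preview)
The paper does not actually prove this theorem; it is quoted from \cite{DEW}, and the paper only summarizes that source's method in one sentence: ``The proof applies the double point formula from intersection theory to a map taking the variety to a variety of its normals.'' So there is no detailed argument in the paper to compare against, only the indication that the cited proof packages the count via the double point formula for the map $x \mapsto N_xX$, whose double points are exactly bottleneck pairs.

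Your approach is genuinely different. Rather than invoking the double point formula, you set up the count directly on $\overline{X}\times\overline{X}\subset\PP^2\times\PP^2$, compute the ambient B\'ezout number $d^2(d^2+1)$, peel off the diagonal via $D_i=\Delta+D_i'$ and the identity $D_1'\cdot D_2'=D_1\cdot D_2+\Delta^2-\Delta\cdot D_1-\Delta\cdot D_2$, and then subtract spurious residual intersections by hand. The arithmetic is correct throughout: $\Delta^2=3d-d^2$, $\Delta\cdot D_i=d^2+d$, the residual count $d^4-2d^2+d$, and the final subtraction of $3d(d-1)$ all check. Compared to the double point formula, your route is more elementary and transparent for plane curves, but the double point formula handles the excess contributions systematically and, as the paper notes, yields a uniform formula in all dimensions and codimensions; your bookkeeping of isotropic tangencies and points at infinity would become unwieldy beyond the planar case.

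The one place your sketch is genuinely incomplete, and you flag it yourself, is the enumeration and multiplicity analysis of the spurious loci. The $2d(d-1)$ isotropic tangent points and the $d(d-1)$ off-diagonal infinity pairs are the right candidates, and your bihomogenization argument showing $E_1,E_2$ vanish when $x_0=y_0=0$ is sound. But you have not ruled out mixed pairs (one point at infinity, one finite) or checked that the diagonal points at infinity do not contribute again to $D_1'\cap D_2'$; and the transversality claims at isotropic points need the second-order term to be nonzero, which is exactly a no-flex condition at those points. These are precisely the genericity hypotheses that the statement leaves unspecified, so your proof would need to articulate them and verify the local multiplicities carefully to be complete.
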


A degree formula for the bottleneck locus of varieties of any dimension is provided in \cite{DEW}. The proof applies the double point formula from intersection theory to a map taking the variety to a variety of its normals. 

\begin{example}
\label{ex:bottlenecks}
We now compute the bottlenecks for the quartic butterfly curve $b(x,y) = 0$. Theorem~\ref{thm:bottleneck_degree} predicts that there are $192/2 = 96$ bottlenecks. Using the description above and \texttt{JuliaHomotopyContinuation} \cite{julia}, we obtain the 96 bottleneck pairs. Of these, 22 are real. We show them in Figure \ref{fig:tooth_bottle}.

\end{example}


We now study bottlenecks from the perspective of Voronoi cells. For a smooth point $p$ in a algebraic curve $X \subset \mathbb{R}^2$, the Voronoi cell $Vor_X(p)$ is a 1-dimensional subset of the normal line to $X$ at the point $p$. Therefore, the normal direction can be recovered from the Voronoi cell $Vor_X(p)$.
For sufficiently small $\epsilon$, an $\epsilon$-approximation $A_\epsilon$ of $X$ will have Voronoi cells whose long edges approximate the normal direction. More precisely, by Theorem \ref{thm:voronoi_convergence}, if $a_\epsilon \in A_\epsilon$ is the point such that $p \in Vor_{A_\epsilon}(a_\epsilon)$, then the directions of the long edges of $Vor_{A_\epsilon}(a_\epsilon)$ converge to the normal direction at $p$.
We remark here that the problem of estimating normal directions from Voronoi cells is well-studied, and numerous efficient, robust algorithms exist \cite{alliez,amenta, feature_detection}.

As in Definition \ref{def:bottleneck}, two points $x,y \in X$ form a bottleneck if their normal lines coincide. This implies that the line connecting them contains both $Vor_X(x)$ and $Vor_X(y)$. 
\begin{definition}

\label{def:bottleneck_candidate}
Let $A_\epsilon$ be an $\epsilon$-approximation of an algebraic curve $X \subset \mathbb{R}^2$. We say a pair $x_\epsilon, y_\epsilon \in A_\epsilon$ is an \emph{approximate bottleneck reach candidate} if the line $\overline{x_\epsilon y_\epsilon}$ joining $x_\epsilon$ and $y_\epsilon$ meets each of $Vor_{A_\epsilon}(x_\epsilon)$ and $Vor_{A_\epsilon}(y_\epsilon)$ at short edges of those cells.
\end{definition}

\begin{figure}[h]
    \centering
    \includegraphics[height = 3.5 in]{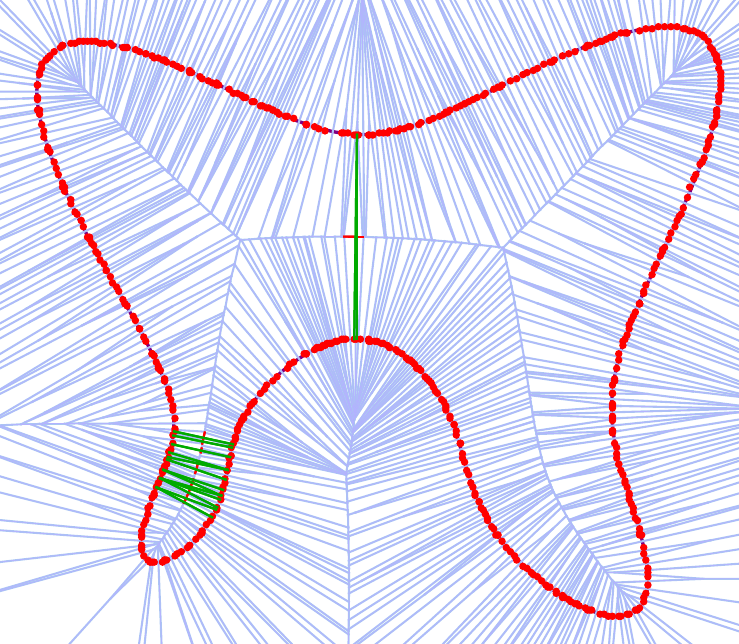}
    \caption{The approximate bottleneck reach candidates (see Definition \ref{def:bottleneck_candidate}) of 568 points sampled from the butterfly curve. The narrowest width of an approximate bottleneck reach candidate is approximately $0.495$ while the true narrowest bottleneck width is approximately $0.503$.}
    \label{fig:approx_bottlenecks}
\end{figure}

In Figure \ref{fig:approx_bottlenecks} we show the approximate bottleneck reach candidates for 348 points sampled from the butterfly curve. The following result gives conditions for bottleneck pairs to be the limit of approximate bottleneck reach candidates.


\begin{theorem}\label{thm:approxbottleneckreachcand}
Let $\{A_\epsilon\}_{\epsilon \searrow 0}$ be a sequence of $\epsilon$-approximations of a smooth algebraic curve $X \subset \mathbb{R}^2$. If $x,y$ is a bottleneck pair of $X$ such that the midpoint of the line segment $\overline{xy}$ is on the medial axis of $X$ and this is the only place where  $\overline{xy}$ meets the medial axis, then there are sequences $x_\epsilon, y_\epsilon \in A_\epsilon$ of approximate bottleneck reach candidates converging to $x$ and $y$. In particular, if $x,y$ is a bottleneck pair of $X$ that achieves the reach, then there are sequences $x_\epsilon, y_\epsilon \in A_\epsilon$ of approximate bottleneck reach candidates converging to $x$ and $y$.
\end{theorem}


\begin{proof}
Let $x,y$ be a bottleneck pair and suppose that the midpoint of the line segment $\overline{xy}$ joining $x$ and $y$ is on the medial axis of $X$ and this is the only place where  $\overline{xy}$ meets the medial axis. We note that this condition holds in particular if $x$ and $y$ are a bottleneck pair that achieves the reach. 
Then $\overline{xy}$ intersects the short edges of exactly two Voronoi cells $Vor_{A_\epsilon}(x_\epsilon)$ and $Vor_{A_\epsilon}(y_\epsilon)$ in a point $v_\epsilon$. Then, $x_\epsilon$ and $y_\epsilon$ form an approximate bottleneck reach candidate by definition. We must then show that the sequence $x_\epsilon$ converges to $x$ and the sequence $y_\epsilon$ converges to $y$.

Since $v_\epsilon$ is in the normal space of $x$,
there exists a neighborhood of $x$ such that
the nearest point of the intersection of this neighborhood and $X$ to $v_\epsilon$ is $x$. So for $\epsilon$ smaller than the radius of this neighborhood, one of the two points in $A_\epsilon$ on either side of $x$ as one moves along $X$ must be the one whose Voronoi cell contains $v_\epsilon$. Since $x_\epsilon$ is the point whose Voronoi cell contains $v_\epsilon$, we have that $x_\epsilon$ is one of the two closest points in $A_\epsilon$ to $x$, meaning that $d(x,x_\epsilon) \leq \epsilon$. Hence, $x_\epsilon$ converges to $x$. Similarly, $y_\epsilon$ converges to $y$.
\end{proof}


\section{Reach}\label{sec:reach}






\begin{example}
\label{ex:reach}
We may find the reach of the butterfly curve by taking the minimum of half the narrowest bottleneck distance and the minimum radius of curvature. This is shown in Figure \ref{fig:reach}. From the computations in Example \ref{ex:bottlenecks}, we find that the narrowest bottleneck distance is approximately $0.251$. Meanwhile, from Example \ref{ex:curvature}, we find that the minimum radius of curvature is approximately $0.104$. Therefore, the reach of the butterfly is approximately $0.104$.
\end{example}

In previous sections, we describe how the reach is the minimum of the minimal radius of curvature and half of the narrowest bottleneck distance. We also give equations for the ideal of the bottlenecks and for the ideal of the critical points of curvature. We now give \texttt{Macaulay2} \cite{M2} code to compute these ideals for smooth algebraic curves $X \subset \RR^2$.
Here, the expression for \texttt{crit} comes from using Lagrange multipliers to find critical points of the affine radius of curvature subject to the constraint given by the curve $f$. 
Finding the points in these ideals, using for example \texttt{JuliaHomotopyContinuation} \cite{julia}, and taking appropriate minimums gives the reach of $X$. See \cite{jreach} for an alternate technique for computing the reach in Julia. 

\begin{verbatim}
R=QQ[x_1,x_2,y_1,y_2]
f= x_1^4 - x_1^2*x_2^2 + x_2^4 - 4*x_1^2 - 2*x_2^2 - x_1 - 4*x_2 + 1
g=sub(f,{x_1=>y_1,x_2=>y_2})
augjacf=det(matrix{{x_1-y_1,x_2-y_2},{diff(x_1,f),diff(x_2,f)}})
augjacg=det(matrix{{y_1-x_1,y_2-x_2},{diff(y_1,g),diff(y_2,g)}})
bottlenecks=saturate(ideal(f,g,augjacf,augjacg),ideal(x_1-y_1,x_2-y_2))
\end{verbatim}

\begin{verbatim}
R=QQ[x,y]
f=x^4 - x^2*y^2 + y^4 - 4*x^2 - 2*y^2 - x - 4*y + 1
num=(diff(x,f))^2 + (diff(y,f))^2
denom=-(diff(y,f))^2*diff(x,diff(x,f)) +
2*diff(x,f)*diff(y,f)*diff(y,diff(x,f)) -
(diff(x,f))^2*diff(y,diff(y,f))
crit=det(matrix({{num*diff(x,denom)- 3/2*denom*diff(x,num),
num*diff(y,denom)-3/2*denom*diff(y,num)},{diff(x,f),diff(y,f)}}))
criticalcurvature=ideal(f,crit)
\end{verbatim}

Alternatively, one can estimate the reach from a point sample. The paper \cite{ACKMRW17} provides a method to do so. We provide a substantially different method that relies upon computing Voronoi and Delaunay cells of points sampled from the curve. We have already discussed how to approximate bottlenecks and curvature using Voronoi cells. This gives the following Voronoi-based Algorithm~\ref{algo:voronoi_reach} for approximating the reach of a curve.

\begin{algorithm}
\caption{Voronoi-Based Reach Estimation}
\label{algo:voronoi_reach}

\begin{algorithmic} 
    \REQUIRE $A \subset X$ a finite set of points forming an $\epsilon-$approximation for a compact, smooth algebraic curve $X \subset \mathbb{R}^2$.
    \ENSURE $\tau$, an approximation of the reach.
    \FOR{$a \in A$}
        \STATE Compute an estimate for the radius of curvature $q_a$ at $a$ using a technique from \cite{feature_detection}.
    \ENDFOR
    \STATE Set $q_{min} = \min_A(q_a)$.
    \STATE Set $\rho$ to be the radius of any disk containing $X$.
    \FOR{$a,b \in A$}
        \IF{$a,b$ form an approximate bottleneck reach candidate as in Definition \ref{def:bottleneck_candidate}}
        \STATE Set $\rho= \min(\rho,d(a,b)/2)$
        \ENDIF
    \ENDFOR
 \STATE Set $\tau = \min(\rho,q_{min})$.
\end{algorithmic}
\end{algorithm}

The reach is equivalently defined as the minimum distance to the medial axis, which suggests the following Delaunay-based Algorithm~\ref{algo:delaunay_reach} for estimating the reach. This algorithm is susceptible to sample error, and to give accurate results would require more sophisticated techniques.

\begin{algorithm}
\caption{Delaunay-Based Reach Estimation}
\label{algo:delaunay_reach}
\begin{algorithmic} 
    \REQUIRE $A \subset X$ a finite set of points forming an $\epsilon-$approximation for a compact, smooth algebraic curve $X \subset \mathbb{R}^2$.
    \ENSURE $\tau$, an approximation of the reach.
    \STATE Compute a Delaunay triangulation $D$ of $A$.
    \STATE Set $M = \emptyset$.
    \FOR{$T$ a Delaunay triangle of $D$}
        \STATE Set $c_T$ be the circumcenter of the Delaunay triangle $T$.
        \STATE Set $M = M \cup \{c_T\}$.
    \ENDFOR
 \STATE Set $\tau = \min_{c \in M, a\in A} d(a,c)$.
\end{algorithmic}
\end{algorithm}

The approximate methods can be used with curves of higher degree, while the symbolic methods are hard to compute for curves with degrees even as low as $4$, but give a more accurate estimate for the reach. This suggests that more work can be done to develop fast and accurate methods to compute the reach of a variety. 


\section*{Acknowledgements}

We thank Paul Breiding, Diego Cifuentes, Yuhan Jiang, Daniel Plaumann, Kristian Ranestad, Rainer Sinn, Bernd Sturmfels, and Sascha Timme for helpful discussions. We thank the referees for their thoughtful comments and suggestions for improving the paper. Research on this project was carried out while the authors were based at the Max Planck Institute for Mathematics in the Sciences (MPI-MiS) in Leipzig, Germany. This material is based upon work supported by the National Science Foundation Graduate Research Fellowship Program under Grant No. DGE 1752814. Any opinions, findings, and
conclusions or recommendations expressed in this material are those of the authors and do not
necessarily reflect the views of the National Science Foundation.

\bibliographystyle{plain}
\bibliography{sample}

\end{document}